\title[Action minim. properties and distances on the group of Ham. diffeos]{Action minimizing properties and distances on the group of Hamiltonian diffeomorphisms}
\author[A Sorrentino]{Alfonso Sorrentino}
\address{CEREMADE, UMR 7534 du CNRS\\
Universit\'e Paris-Dauphine\\
75775 Paris Cedex 16 \\  France. {(Current Address: Department of Pure Mathematics and Mathematical Statistics, University of Cambridge, Wilberforce Road, Cambridge CB3 0WB, United Kingdom.)}}
\email{A.Sorrentino@dpmms.cam.ac.uk}
\urladdr{}
\author[C Viterbo]{Claude Viterbo}
\address{Centre de Math\'ematiques Laurent Schwartz \\ UMR 7640 du CNRS \\ \'Ecole Polytechnique\\ 91128 Palaiseau \\ France}
\email{viterbo@math.polytechnique.fr}
\urladdr{}
\newtheorem{Teo}{Theorem}[section]
\newtheorem{Prop}[Teo]{Proposition}
\newtheorem{Cor}[Teo]{Corollary}
\newtheorem{Lem}[Teo]{Lemma}
\theoremstyle{definition}
\newtheorem{Def}[Teo]{Definition}
\newtheorem{Rem}[Teo]{Remark}
\newcommand\beqa[1]{ \begin{eqnarray} \label{#1}}
\newcommand{\eeqa}{ \end{eqnarray} }
\newcommand{\beqano}{ \begin{eqnarray*} }
\newcommand{\eeqano}{ \end{eqnarray*} }
\newcommand{\T}{ {\mathbb T}   }
\renewcommand \a {\alpha}
\newcommand \e {\varepsilon }
\renewcommand \b  {\beta}
\renewcommand \d {\delta}
\newcommand \m {\mu}
\newcommand \g {\gamma}
\renewcommand \l {\lambda}
\renewcommand \L {\Lambda}
\newcommand \cA {{\mathcal A}}
\newcommand \cH {{\mathcal H}}
\newcommand \cL {{\mathcal L}}
\newcommand \cO {{\rm O}}
\newcommand \cP {{\mathcal P}}
\newcommand \U {{\mathcal B}}
\newcommand \calM {\mathfrak{M}}
\def\ie{\hbox{ie\ }}
\newcommand \dpr {\partial}
\newcommand \rH {{\rm H}}
\newcommand \rT {{\rm T}}
\newcommand \Ham {{\mathcal Ham}}
\newcommand \Cal {{\mathcal Ca}\ell\, }
\newcommand \id {{\rm id}}
\newcommand \Osc {{\rm Osc\,}}
\begin{document}

\begin{abstract}    
In this article we prove that for a smooth fiberwise convex Hamiltonian,  the asymptotic Hofer distance from the identity gives a strict upper bound to the value at $0$ of Mather's $\beta$ function, thus providing a negative answer to a question asked by Siburg in \cite{Siburg1998}. However, we show that equality holds if one considers  the asymptotic distance defined in Viterbo \cite{Viterbo1992}.
\end{abstract}

\maketitle


\section{Introduction}

The relationship between Aubry--Mather theory  and the new tools  of symplectic topology  has attracted quite a bit of attention over the last years. 
These two approaches correspond to two different ways  of looking at  Hamiltonian systems.  While the former investigates the dynamics of the system { in the phase space}, the latter takes a more global look at the topology of the path that the Hamiltonian flow describes { in the group of Hamiltonian diffeomorphisms}. Trying to relate and combine these ``internal" and  ``external'' information is a very intriguing task. 

In this article we shall concentrate on the relation between the { action minimizing properties} of the flow of a convex Hamiltonian and its ``{asymptotic distance''} from the identity.
 
Given a {Tonelli} Hamiltonian $H$  and its corresponding Lagrangian $L$ (obtained by Legendre duality),  Aubry--Mather theory associates 
 the so-called $\beta$-{\it function} (or {\it effective Lagrangian}). Roughly speaking,  the value of such a function represents  the minimal average { Lagrangian action} needed to carry out motions with a prescribed  ``rotation vector''. The Legendre dual of the $\beta$-{function} is what is called the $\alpha$-{\it function} (or {\it effective Hamiltonian}). See section \ref{section3} for precise definitions.
  
On the other hand the corresponding Hamiltonian flow $\varphi_H^t$ determines a curve in the group of Hamiltonian diffeomorphisms. 
Recall that there are several natural metrics that can be defined on this group, in particular the so-called {\it Hofer distance} \cite{Hofer} and $\gamma$-{\it distance}\footnote{Sometimes called {\it Viterbo distance}.} \cite{Viterbo1992} (see section \ref{section2}).

Studying the connection between these objects arises quite naturally and has indeed been studied by several authors in the last years.

One question, for instance, concerns the relationship between $\beta_H(0)$, associated to a Tonelli Hamiltonian $H$, and the asymptotic Hofer distance from the identity of its time-one flow map $\varphi_H^1$. 
Observe that the definitions need to  be adjusted since $\varphi_H^1$ is not compactly supported, while the Hofer distance is only defined for compactly supported Hamiltonians. 

In \cite{Siburg1998}  Siburg proves, in the case of Hamiltonian diffeomorphisms on the cotangent disc bundle generated by a convex Hamiltonian $H$, that the asymptotic Hofer distance yields an upper bound for $\beta_H(0)$ (see Proposition \ref{ex-corollary}) and asks whether or not equality holds. 
In this paper we show that Siburg's question has a negative answer (Corollary \ref{answerSiburg}), by constructing  examples of convex Hamiltonian diffeomorphisms for which the asymptotic Hofer distance from the identity is strictly greater than the asymptotic $\g$-distance (Theorem \ref{counterex}). However, Siburg's question has a positive answer, provided  the asymptotic Hofer distance is replaced by the asymptotic $\gamma$-distance (Proposition \ref{ex-corollary}). Moreover, we extend these results to the case of general Hamiltonian diffeomorphisms generated by autonomous Tonelli Hamiltonians (Theorem \ref{Theorem1} and Corollary \ref{generalizationpropsiburg}).

Our proof  uses the theory of symplectic homogenization, for which we refer to section \ref{sectionHomog} for a short presentation and to Viterbo \cite{Viterbo2009} for more details. 
Observe that Corollary \ref{answerSiburg} is also stated in Cui \cite{Cui}. 
However, as the author  kindly confirmed to us, there  is a gap  in the proof of \cite[Proposition $7$]{Cui}. Although our proof goes along completely different lines, we are grateful to X Cui for drawing our attention to this problem. \\

\noindent{\it Acknowledgements.} We thank the anonimous referee for the highly appreciated comments and suggestions.
Alfonso Sorrentino would also like to acknowledge the supports of {\it Fondation des Sciences Math\'ematiques de Paris}, {\it Herchel-Smith foundation} and {\it Agence Nationale de la Recherche} project ``Hamilton--Jacobi et th\'eorie KAM faible''. Claude Viterbo was also supported by {\it Agence Nationale de la Recherche} projects ``Symplexe'' and ``Floer Power''.

\section{Metric structures on the group of Hamiltonian diffeomorphisms}\label{section2}
We first  define the group of Hamiltonian diffeomorphims and two metrics on this group: {\it Hofer distance} and {\it $\gamma$-distance}.\\
Let $\| \cdot \|$ denote the standard metric on the $n$-dimensional torus $\T^n \simeq {\mathbb R}^n/ {\mathbb Z}^n$ and $\omega=-d\l$ the canonical symplectic structure on $\rT^*\T^n$, where $\lambda=\sum_{j=1}^n p_{j}dq_{j}$ is the  Liouville form on $\rT^*\T^n $. 
We denote   by $\cH_0$  the set of {\it admissible} time-dependent Hamiltonians $H\in C^2(\rT^*\T^n\times \T)$, such that $H_t(q,p):=H(q,p,t)$ has compact support.
For each $H\in\cH_0$ we consider the corresponding Hamiltonian flow $\varphi_H^t$ and denote by $\varphi_H:=\varphi^1_H$ its time-one map. 
The {\it group of Hamiltonian diffeomorphisms} 
$\Ham_0(\rT^*\T^n):=\Ham_0(\rT^*\T^n,\omega)$ is the set of all Hamiltonian diffeomorphisms $\varphi: \rT^*\T^n \longrightarrow \rT^*\T^n$ that are obtained as time-one maps of elements in $\cH_0$, \ie $\varphi=\varphi_H$ for some $H\in\cH_0$.

We shall now define the Hofer and $\gamma$-distances for elements in $\Ham_0(\rT^*\T^n)$.

\subsection{The Hofer distance}

This first metric structure on the group of compactly supported Hamiltonian diffeomorphisms was defined by Hofer (see \cite{Hofer}). Consider a path in the group of compactly supported Hamiltonian diffeomorphisms, given by an admissible Hamiltonian $H$. One first defines the lenght $\ell (H)$ of this Hamiltonian path, by setting 
$$
\ell(H):= \int_{\T} {\Osc H_t}\,dt,
$$ 
where ${\Osc}H_t := \max_{\rT^*\T^n} H_t - \min_{\rT^*\T^n}  H_t$ denotes the oscillation of $H_t$.

\begin{Def}[{\bf Hofer distance}]
The  Hofer distance from the identity (or  energy) of an element $\varphi\in\Ham_0(\rT^*\T^n)$ is given by
$$
d^H({\rm id}, \varphi) := \inf \big\{\ell(H):\; H\in \cH_0\;{\rm and}\; \varphi=\varphi_H \big\}.
$$
This extends to a distance on  $\Ham_0(\rT^*\T^n)$: if $\varphi, \psi \in \Ham_0(\rT^*\T^n)$, then $d^H(\varphi, \psi):= d^H({\rm id}, \psi\circ \varphi^{-1})$.
\end{Def}

It is easy to verify that  if $H, K\in\cH_0$, then $$d^H(\varphi_{H}^1, \varphi_{K}^1) \leq \Vert H-K \Vert _{C^0}.$$

Observe that this definition only considers the flow  of a given Hamiltonian at time $t=1$. In the study of the dynamics, however,  one is interested in the long time behaviour of the system and it would be more relevant to get {global} information, such as, for instance, the {\it asymptotic Hofer distance from the identity} introduced by Bialy and Polterovich in \cite{BialyPolterovich}.

\begin{Def}[{\bf Asymptotic Hofer distance}]
Let $\varphi$ be a Hamiltonian diffeomorphism in $\Ham_0(\rT^*\T^n)$. The asymptotic Hofer distance from the identity is:
$$
d_{\infty}^H(\id,\varphi):= \lim_{k\rightarrow +\infty} \frac{d^H(\id,\varphi^k)}{k}\,.
$$
\end{Def}

\noindent It follows from the triangle inequality that the above limit exists and that $d^H_{\infty} \leq d^H$.

\subsection{The $\gamma$-distance} \label{subsec2.2}

One can also introduce another metric on $\Ham_0(\rT^*\T^n)$, commonly referred to as {\it $\gamma$-distance}. 
First of all, let us recall the following construction (see Viterbo \cite{Viterbo1992} for more details).
Let $\cL_0$ denote the set of Lagrangian submanifolds $\L$ of $\rT^*\T^n$, which are Hamiltonianly isotopic to the zero section $\cO_{\T^n}$, \ie there exists a Hamiltonian isotopy $\varphi^t$ such that $\L=\varphi^1(\cO_{\T^n})$. Consider $\L\in \cL_0$ and let $S_\L: \T^n\times \R^k \longrightarrow \R$ to be a {\it generating function quadratic at infinity} (GFQI) for $\L$ (see Viterbo \cite{Viterbo1992} for the definition). Since $\L$ is Hamiltonianly isotopic to $\cO_{\T^n}$, then $S_\L$ is unique up to some elementary operations. 
 Moreover, if we denote by $S_\L^{\lambda}:=\{(q;\xi)\in \T^n\times \R^k:\; S_\L(q;\xi)\leq \l \}$, then for sufficiently large $c\in \R$ we have that 
$\rH^*(S_\L^c,S_\L^{-c})\simeq \rH^*(\T^n)\otimes \rH^*(D^-,\dpr D^-)$, where $D^-$ is the unit disc of the negative eigenspace of the quadratic form $B$ associated to $S_\L$.  
Therefore to  each cohomology class $\a\in \rH^*(\T^n)\setminus\{0\}$, one can associate the image $\a\otimes T$ ($T$ is a chosen generator of $\rH^*(D^-,\dpr D^-)\simeq \Z$) and, by min-max methods, a critical level $c(\a,S_\L)$ (see Viterbo \cite[pages 690--693]{Viterbo1992} for more details). 

Let us now consider $\varphi\in \Ham_0(\rT^*\T^n)$. Recall that its graph $$\Gamma(\varphi)=\{(z,\varphi(z)):\; z\in\rT^*\T^n\}$$ is a Lagrangian submanifold of $\rT^*\T^n\times \overline{\rT^*\T^n}$, where $\overline{\rT^*\T^n}$ denotes ${\rT^*\T^n}$ with the sympectic form $-\omega$ (see for example Cannas da Silva \cite{Cannas}). Since $\rT^*\T^n\times \overline{\rT^*\T^n}$ is covered by $\rT^*(\Delta_{\rT^*\T^n})$, where $\Delta_{\rT^*\T^n}$
is the diagonal, we may lift $\Gamma(\varphi)$ to $\widetilde{\Gamma}(\varphi)$, which is still a Lagrangian submanifold in $\rT^*(\Delta_{\rT^*\T^n})$. Moreover, since $\varphi$ has compact support, we can compactify both
$\widetilde{\Gamma}(\varphi)$ and $\Delta_{\rT^*\T^n}$ and we obtain a Lagrangian submanifold $\overline{\Gamma}(\varphi)$ in $\rT^*(S^n\times \T^n)$. In Viterbo \cite[page 697 and page 706]{Viterbo1992} the author defined the following distance from the identity.

\begin{Def}[{\bf $\gamma$-distance}]
Let $\varphi \in \Ham_0(\rT^*\T^n)$.  
The $\gamma$-distance of $\varphi$ from the identity  is given by:
$$\gamma(\id, \varphi):= c(\mu_{\T^n}\otimes\mu_{S^n},  \overline{\Gamma}(\varphi) ) - c(1\otimes1,  \overline{\Gamma}(\varphi)).$$

In particular, this can be easily extended to a distance on the all group: if  $\varphi,\psi \in \Ham_0(\rT^*\T^n)$, then $\gamma(\varphi,\psi):= \gamma(\id, \psi\circ \varphi^{-1})$.

We also define $$c_+(\varphi):=c(\mu_{\T^n}\otimes\mu_{S^n},  \overline{\Gamma}(\varphi) )$$ and $$c_-(\varphi):=c(1\otimes1,  \overline{\Gamma}(\varphi))$$ so that $\gamma (\id, \varphi)= c_+(\varphi)-c_{-}(\varphi)$. 
\end{Def}

It is possible to show again that  $\gamma(\varphi_{H}^1, \varphi_{K}^1) \leq \Vert H-K \Vert _{C^0}$ for all $H,K\in\cH_0$, and more precisely that  $\gamma (\varphi_{H}^1,\varphi_{K}^1) \leq d^H( \varphi_{H}^1,\varphi_{K}^1)$ (see Proposition \ref{confrontodistanze}).

Analogously to what we have already seen for Hofer distance, one can introduce the  {\it asymptotic $\gamma$-distance from the identity}:
\begin{Def}[{\bf Asymptotic $\gamma$-distance}] 
Let $\varphi$ be a Hamiltonian diffeomorphism in $\Ham_0(\rT^*\T^n)$. The asymptotic $\gamma$-distance from the identity is:
$$
\g_{\infty}(\id, \varphi):= \lim_{k\rightarrow +\infty} \frac{\gamma( \id, \varphi^k)}{k}.
$$

Similarly, $$c_{\pm, \infty}(\varphi)= \lim_{k\to +\infty}\frac{ c_\pm (\varphi^k)}{k}.$$
\end{Def}

\begin{Prop}\label{confrontodistanze}
For all $\varphi \in \Ham_0(\rT^*\T^n)$, $\gamma(\id, \varphi) \leq d^H(\id,\varphi)$. In particular,
$\g_{\infty}(\id, \varphi) \leq d_{\infty}^H(\id,\varphi)$.
\end{Prop}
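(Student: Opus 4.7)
The plan is to deduce the inequality from the fundamental oscillation estimate for the spectral invariants $c_\pm$ defined in Subsection \ref{subsec2.2}, and then pass to the asymptotic version by subadditivity. I would organize the argument in the following steps.

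First, I would recall (or reprove, depending on how self-contained we want to be) the basic \emph{sandwiching} property of the spectral invariants: for every $H\in\cH_0$,
\begin{equation*}
\int_0^1 \min_{\rT^*\T^n} H_t\, dt \;\leq\; c_-(\varphi_H^1) \;\leq\; c_+(\varphi_H^1) \;\leq\; \int_0^1 \max_{\rT^*\T^n} H_t\, dt.
\end{equation*}
This is a standard feature of the generating-function construction in Viterbo \cite{Viterbo1992}: the critical values $c(\a, S_\L)$ depend monotonically on the Lagrangian, and if one deforms the generating function of $\overline{\G}(\varphi_H^t)$ along the Hamiltonian isotopy, the derivative of $c_\pm$ in $t$ can be read off as the value of $H_t$ at the corresponding critical point, which is bounded between $\min H_t$ and $\max H_t$. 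Integrating in $t$ from $0$ to $1$ (and using that $c_\pm(\id)=0$ with our normalization) yields the displayed sandwich.

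Second, subtracting the bounds in the previous step gives
\begin{equation*}
\gamma(\id,\varphi_H^1) \;=\; c_+(\varphi_H^1)-c_-(\varphi_H^1) \;\leq\; \int_0^1 \bigl(\max H_t - \min H_t\bigr)\,dt \;=\; \int_\T \Osc H_t\,dt \;=\; \ell(H).
\end{equation*}
Taking the infimum over all admissible $H\in\cH_0$ with $\varphi_H^1=\varphi$ yields the desired finite-time estimate $\gamma(\id,\varphi)\leq d^H(\id,\varphi)$.

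Third, the asymptotic version is immediate. Applying the already-established finite-time inequality to $\varphi^k$ gives $\gamma(\id,\varphi^k)\leq d^H(\id,\varphi^k)$; dividing by $k$ and letting $k\to+\infty$ we get $\gamma_\infty(\id,\varphi)\leq d^H_\infty(\id,\varphi)$, provided both limits exist. Their existence follows from subadditivity $\gamma(\id,\varphi^{k+\ell})\leq \gamma(\id,\varphi^k)+\gamma(\id,\varphi^\ell)$ (triangle inequality together with bi-invariance under conjugation) and the analogous property for $d^H$, via Fekete's lemma.

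The only genuine technical point is the oscillation bound for $c_\pm$ in the first step; everything else is formal. If one prefers to avoid reproving it, it can be invoked directly from \cite{Viterbo1992}, where it is established as part of the properties of the spectral numbers associated to generating functions quadratic at infinity.
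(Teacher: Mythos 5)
Your argument is correct and is essentially the same route the paper intends: the paper simply records the inequality as an immediate consequence of Viterbo \cite[Proposition 4.6]{Viterbo1992}, and that cited proposition is precisely the oscillation estimate $\int \min H_t\,dt \leq c_-(\varphi_H^1) \leq c_+(\varphi_H^1) \leq \int \max H_t\,dt$ that you establish in your first step. The subtraction, the infimum over generating Hamiltonians, and the passage to the asymptotic quantities by dividing by $k$ are the same formal conclusions the paper leaves implicit.
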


This is an immediate consequence of Viterbo \cite[Proposition 4.6]{Viterbo1992} and it was explicitly stated for example in Viterbo \cite[Proposition 2.15]{Viterbo2006}  or  in Humili\`ere \cite[Proposition 1.52]{Humiliere}.


 \section{Symplectic homogenization}\label{sectionHomog}
 In this section we want to provide a brief presentation of the theory of  {\it Symplectic Homogenization}, developed in Viterbo \cite{Viterbo2009}.
The main goal of this theory is to define a notion of  ``{\it homogenization}'' for Hamiltonian diffeomorphisms of $\rT^*\T^n$. More specifically, it provides an answer to the following question. Given a Hamiltonian $H(q,p,t)$, supported in a compact subset of $(q,p)\in \rT^*\T^n$ and  $1$-periodic in $t$, one would like to study whether or not the sequence of  ``rescaled'' Hamiltonians $H_k(q,p,t):=H(k\cdot q,p,kt)$ ``{\it converges}'' to some Hamiltonian $\overline{H}$, for some suitable topology.  \\
The first problem is represented by what one means by ``convergence''. A plausible interpretation would be as convergence of the time-one flows of $H_k$ to the  time-one flow of $\overline{H}$. However, since the $C^0$-convergence of the flows does not hold in general, then the chosen topology  must be rather weak. In the following we shall consider the topology induced by the $\g$-distance  defined in section \ref{subsec2.2}. Observe that the convergence in this metric does not imply any sort of pointwise or almost everywhere convergence.\\

\begin{Teo}[Theorem 4.2 in Viterbo \cite{Viterbo2009}]\label{Thm4.2}
There exists a projection operator
\beqano
\cA: C^2_0(\rT^*\T^n\times\T,\R) &\longrightarrow& C^0_0(\R^n)\\
H &\longmapsto& \overline{H}
\eeqano
such that the sequence $H_k$ $\g$-converges to $\cA(H)=\overline{H}$, \ie   the associated time-one  flows of ${H_k}$ $\g$-converge to the time-one flow \footnote{Although $\overline{H}$ is in general only continuous, one can nevertheless define its ``time-one flow'' as an element of  $\widehat{\Ham}_0(\rT^*\T^n)$, \ie the completition of $\Ham_0(\rT^*\T^n)$ with respect to $\g$. See Humili\`ere \cite{Humiliere2} for more details about this space.}
of $\overline{H}$. In particular, $\cA$ extends by $\g$-continuity to a map $\cA: C^0_0(\rT^*\T^n\times\T,\R) \longrightarrow C^0_0(\R^n)$. Moreover, the map $\cA$ satisfies the following properties: 
\begin{itemize}
\item[{\rm (1)}] It is monotone, \ie if $H_1\leq H_2$, then $\cA(H_1)\leq \cA(H_2)$.
\item[{\rm (2)}] It is invariant by Hamiltonian symplectomorphisms, \ie $\cA(H\circ \psi) = \cA(H)$ for all $\psi \in \Ham(\rT^*\T^n)$.
\item[{\rm (3)}] We have $\cA(-H)=\cA(H)$.
\item[{\rm (4)}] The map $\cA$ extends to characteristic functions of subsets, hence induces a map (still denoted by $\cA$) between $\cP(\rT^*\T^n)$, the set of subsets of $\rT^*\T^n$, to $\cP(\R^n)$, the set of subsets of $\R^n$. This map is bounded by the symplectic shape of Sikorav {\rm \cite{Sikorav}}, \ie
$$
{\rm shape}(U) = \{p_0\in \R^n:\; \exists \psi\in \Ham_0(\rT^*\T^n),\; \psi(\T^n\times \{p_0\}) \subset U\} \subset \cA(U).
$$
\item[{\rm (5)}] If $\L$ is a Lagrangian submanifold Hamiltonianly isotopic to $\L_{p_0}=\T^n\times\{p_0\}$ and $H\big|_{\Lambda} \geq h$ {\rm(}resp. $\leq h${\rm)}, then $\cA(H)(p_0)\geq h$ {\rm(}resp. $\leq h${\rm)}.
\item[{\rm (6)}] We have 
\beqano
\lim_{k\rightarrow +\infty} \frac{1}{k}c_+(\varphi_H^k) &=& \sup_{p\in \R^n} \overline{H}(p)\\
\lim_{k\rightarrow -\infty} \frac{1}{k}c_-(\varphi_H^k) &=& \inf_{p\in \R^n} \overline{H}(p)\,.
\eeqano
\item[{\rm (7)}] Given any measure $\m$ on $\R^n$, the map
$$
\zeta(H):= \int_{\R^n} \cA(H)(p)\,d\m(p)
$$
is a symplectic quasi-state {\rm(}cf {\rm Entov and Polterovich \cite{Entov-Polterovich}}{\rm)}. In particular we have $\cA(H+K)=\cA(H)+\cA(K)$, whenever $H$ and $K$ Poisson-commute {\rm(}\ie $\{H,K\}=0${\rm)}.
\end{itemize}
\end{Teo}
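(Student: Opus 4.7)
The plan is to build $\mathcal{A}$ out of the Viterbo spectral invariants $c_\pm$ and then to verify the seven properties in turn; the core work is a pointwise construction of $\overline{H}$ followed by a $\gamma$-convergence argument upgrading the construction to the level of flows.

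First I would construct $\overline{H}$ pointwise. Fix $p_0 \in \R^n$, let $T_{p_0}(q,p) := (q, p + p_0)$, and let $H^{p_0}$ be the compactly supported Hamiltonian obtained by conjugating $H$ with a cutoff of $T_{p_0}$, so that $H^{p_0}$ near the zero section agrees with $H$ near $\T^n \times \{p_0\}$. Set
\[
\overline{H}(p_0) := \lim_{k\to\infty} \frac{1}{k}\, c_+\!\left(\varphi^1_{H^{p_0}_k}\right),
\]
where $H^{p_0}_k(q,p,t) := H^{p_0}(kq,p,kt)$. The limit exists by Fekete's subadditive lemma, applied to the subadditivity of $c_+$ along the group together with a scaling comparison between $\varphi^1_{H^{p_0}_{k+\ell}}$ and the composition $\varphi^1_{H^{p_0}_k}\,\varphi^1_{H^{p_0}_\ell}$. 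Lipschitz dependence on $p_0$ follows from the $C^0$-estimate $|c_+(\varphi_H) - c_+(\varphi_K)| \leq \|H-K\|_{C^0}$, which is a consequence of Proposition \ref{confrontodistanze}.

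Next I would upgrade this pointwise construction to $\gamma$-convergence of the flows, namely that $\varphi^1_{H_k}$ $\gamma$-converges to the element $\varphi^1_{\overline{H}}$ of the Humili\`ere completion $\widehat{\Ham}_0$. The strategy is to control the Lagrangian spectral invariants of the graph of $\varphi^1_{H_k}$ tested against every section $\Lambda_{p_0} = \T^n \times \{p_0\}$ uniformly in $p_0$ on compact sets; the pointwise convergence above, combined with the Lipschitz bound, supplies both the candidate limit and the uniform control needed. The seven properties then fall out with varying effort: monotonicity (1) is inherited from $c_+$; symplectic invariance (2) from the conjugation invariance of $c_\pm$ together with $\varphi_{H\circ\psi}^t = \psi^{-1}\varphi_H^t\psi$; property (3) from $c_-(\varphi) = -c_+(\varphi^{-1})$ and $\varphi_{-H}^t = (\varphi_H^t)^{-1}$; the shape bound (4) by testing $\mathcal{A}$ on characteristic functions against embedded Lagrangian sections $\psi(\T^n \times \{p_0\}) \subset U$; the Lagrangian control (5) by a min-max argument on a GFQI for $\Lambda$ Hamiltonianly isotopic to $\Lambda_{p_0}$; (6) is essentially the definition of $\overline{H}$ (with the inf version coming from (3) applied to $-H$); and the quasi-state property (7) follows from the triangle inequality for spectral invariants together with their sharper additivity on Poisson-commuting Hamiltonians.

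The main obstacle is the $\gamma$-convergence step. Upgrading pointwise limits of spectral numbers to convergence of flows in the $\gamma$-topology requires uniform control of the spectral invariants of the graph of $\varphi^1_{H_k}$ against the whole family of sections $\Lambda_{p_0}$, which is technically delicate because the $\gamma$-distance is not controlled by any pointwise or almost-everywhere convergence. The scaling $(q,t) \mapsto (kq, kt)$ is essential here, since it guarantees that the rescaled flows interact coherently with the period structure of $\T^n$ and allows the uniform estimates to close. Everything else in the theorem is then a formal consequence of standard spectral-invariant calculus.
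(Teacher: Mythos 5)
The paper does not actually prove Theorem~\ref{Thm4.2}: it explicitly refers the reader to Viterbo \cite[Section 6]{Viterbo2009} for the proof, and only offers a one-line derivation of property (6), namely the scaling identity
$\frac{1}{k}c_{\pm}(\varphi_{H}^k)= c_{\pm}(\rho_k^{-1}\varphi_{H}^k\rho_k)$ with $\rho_k(q,p)=(k\cdot q,p)$, combined with the $\gamma$-continuity of $c_{\pm}$. So there is no ``paper's own proof'' to compare your sketch against; the only point of contact is that remark about (6), and there your account diverges from what the paper says: you treat (6) as ``essentially the definition,'' whereas the paper derives it from the conjugation identity above together with $\gamma$-continuity. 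These are not the same thing unless you have already shown that your pointwise construction reassembles into a $\gamma$-limit for the global rescaled flow, which is precisely the hardest step you defer.

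There is also a concrete gap in your pointwise construction. You set
$\overline{H}(p_0):=\lim_{k}\frac{1}{k}\,c_+\bigl(\varphi^1_{H^{p_0}_k}\bigr)$
where $H^{p_0}$ is a cutoff of $H\circ T_{p_0}^{-1}$. But $c_+$ is a global invariant: by property (6) itself, $\lim_k\frac{1}{k}c_+(\varphi^1_{H^{p_0}_k})=\sup_p \overline{H^{p_0}}(p)$, which is the \emph{supremum} of the homogenization of the truncated-and-translated Hamiltonian over all of $\R^n$, not its value at $p=0$. There is no reason for this supremum to be achieved near $p=0$ (hence near $p_0$ after undoing the translation), so the formula does not produce the pointwise value $\overline{H}(p_0)$; in general it overshoots. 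A genuine pointwise definition has to use a \emph{Lagrangian} spectral invariant tested against the specific section $\Lambda_{p_0}=\T^n\times\{p_0\}$ (for instance, a min-max value of the GFQI of $\varphi^1_{H_k}(\Lambda_{p_0})$ or of the graph of $\varphi^1_{H_k}$ relative to $\Lambda_{p_0}\times\Lambda_{p_0}$), which is what makes property (5) accessible and what the construction in \cite{Viterbo2009} actually does. Fekete's lemma and the $C^0$-Lipschitz estimate are fine tools once you have the right quantity, but as written your candidate for $\overline{H}(p_0)$ is not it. The same confusion resurfaces in your claim that (6) follows by definition: with the correct Lagrangian-invariant definition, (6) is an identity between the Lagrangian invariants (which give pointwise values) and the Hamiltonian invariants $c_\pm$ (which give global extrema), and proving that identity is exactly where the $\rho_k$-conjugation argument and $\gamma$-continuity enter.
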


We refer to Viterbo \cite[Section 6]{Viterbo2009} for a proof of this theorem. Observe that property (6), which will play a crucial role in our proofs, can be deduced from the fact that 
$c_{\pm}$ are continuous with respect to the $\g$-topology (see Viterbo \cite{Viterbo1992}) and that
$\frac{1}{k}c_{\pm}(\varphi_{H}^k)= c_{\pm}(\rho_k^{-1}\varphi_{H}^k\rho_k),$
where $\rho_k(q,p):=(k\cdot q,p)$.

\begin{Rem}\label{Rem4.3}
\begin{itemize}
\item[{\rm (i)}] As a result of (5), if $u$ is a smooth subsolution of the stationary Hamilton--Jacobi equation, that is 
$H(q,p+du(q))\leq h$, then $\overline{H}(p)\leq h$.
Similarly, if $u$ is a smooth supersolution, that is 
$H(q,p+du(q))\geq h$, then $\overline{H}(p)\geq h$.
\item[{\rm (ii)}] From (5) we also get the following statement. Let
\beqano
E_c^+ &=& \{p_0\in \R^n:\; \exists\, \L\; \mbox{Lag. subman. Hamilton. isotopic to}\; \L_{p_0},\; H\big|_{\L} \geq c \}\\
E_c^- &=& \{p_0\in \R^n:\; \exists\, \L\; \mbox{Lag. subman. Hamilton. isotopic to}\; \L_{p_0},\; H\big|_{\L} \leq c \}\,.
\eeqano
If $p\in \overline{E^+_c}\cap \overline{E^-_c}$, then $\overline{H}(p)=c$.
\end{itemize}
\end{Rem}

So far we have considered compactly supported Hamiltonians. Actually the whole theory can be also extended to the non-compact case,  but one needs to impose some conditions on the ``growth'' of the Hamiltonian. We shall say that a Hamiltonian $H$ is {\it coercive} if 
$$
\lim_{\|p\|\rightarrow + \infty} H(q,p, t)=+\infty.
$$

Let us  describe the autonomous case, which we shall use in the sequel. Given $H: \rT^*\T^n \longrightarrow \R$ a coercive Hamiltonian,
the basic idea consists in considering a truncation of the Hamiltonian $H_A(q,p):=\chi_A(\|p\|)H(q,p)$, where $\chi_A:\R \longrightarrow \R$ is supported on $[-2A,2A]$ and $\chi_A(s)\equiv 1$ on $[-A,A]$.
For any $A>0$, this new Hamiltonian satisfies the hypotheses of Theorem \ref{Thm4.2} and one can therefore define its homogenization $\overline{H}_A$.
We then define $\overline{H}:= \lim_{A\rightarrow +\infty} \overline{H}_A$. It is possible to check (see Viterbo \cite[Section 9.1]{Viterbo2009}) that this function is well defined up to a constant. Hence,

\begin{Prop}[{Proposition 9.3 in Viterbo \cite{Viterbo2009}}]\label{prop9.3} The map $\cA$ extends to a map defined on the set of autonomous coercive Hamiltonians.
\end{Prop}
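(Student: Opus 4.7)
The plan is to show that the homogenizations $\overline{H}_A := \cA(H_A)$ of the truncations $H_A = \chi_A(\|p\|) H$, each of which is well defined by Theorem \ref{Thm4.2} since $H_A$ is compactly supported, stabilize up to a global additive constant as $A \to +\infty$, yielding a limit $\overline{H}$ independent of the choice of cutoff family $\chi_A$.

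First I would obtain pointwise bounds on $\overline{H}_A(p_0)$ that are uniform in $A$. For any $A > \|p_0\|$, the Lagrangian $\L_{p_0} = \T^n \times \{p_0\}$ lies entirely in the region where $H_A \equiv H$, so property (5) of Theorem \ref{Thm4.2} applied to $\L = \L_{p_0}$ yields
\begin{equation*}
\min_{q\in \T^n} H(q,p_0) \;\leq\; \overline{H}_A(p_0) \;\leq\; \max_{q \in \T^n} H(q,p_0),
\end{equation*}
a bound independent of $A$. Coercivity of $H$ ensures both quantities are finite.

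Next I would compare $\overline{H}_A$ and $\overline{H}_B$ for $A < B$. On the annular region $\{A \leq \|p\| \leq 2B\}$, the Hamiltonians $H_A$ and $H_B$ differ by a function that can be sandwiched between two constants $-c_{A,B}$ and $c_{A,B}$ determined by the extrema of $H$ on $\{\|p\|\leq 2B\}$. By monotonicity (property (1)) this gives $|\overline{H}_A - \overline{H}_B| \leq c_{A,B}$ pointwise; combined with the preceding uniform bound it forces $\overline{H}_A - \overline{H}_B$ to be asymptotically a constant function of $p$. Thus, after normalizing by subtracting $\overline{H}_A(0)$, the family $\overline{H}_A - \overline{H}_A(0)$ is Cauchy on every compact subset of $\R^n$. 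Equicontinuity of the $\overline{H}_A$ (deduced by comparing $\L_{p_0}$ with nearby shifted Lagrangians via a small Hamiltonian isotopy and again invoking property (5)) then upgrades Cauchy-ness to uniform convergence on compacts via Arzelà--Ascoli. Independence of the cutoff follows from the same monotone sandwich: two cutoff families $\chi_A, \tilde\chi_A$ give truncations that can be bounded by the common pair $\min(\chi_A, \tilde\chi_A) \cdot H$ and $\max(\chi_A, \tilde\chi_A) \cdot H$, whose homogenizations agree in the limit by the previous step.

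The main obstacle is verifying that the deviations between $\overline{H}_A$ and $\overline{H}_B$ are genuinely constant in $p$ (up to vanishing errors), rather than a non-trivial function of $p$. This is where coercivity of $H$ plays an essential role: without it, the truncation could alter the homogenization by a non-constant function of $p$, and no additive normalization would suffice. Technically this step requires a careful use of property (5) applied to Lagrangians isotopic to $\L_{p_0}$ that are contained in the region $\{\|p\|\leq A\}$, whose existence for $A$ large enough (depending on $p_0$) is guaranteed by coercivity and the fact that the Hamiltonian flow of $H_A$ coincides with that of $H$ on this region.
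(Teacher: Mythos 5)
Your overall strategy — truncate to compactly supported $H_A = \chi_A(\|p\|)H$, homogenize each $H_A$ via Theorem~\ref{Thm4.2}, and pass to the limit $A\to+\infty$ — is precisely the construction that the paper sketches before the statement of Proposition~\ref{prop9.3}. Your first step is also correct: for $A>\|p_0\|$ the standard Lagrangian $\L_{p_0}=\T^n\times\{p_0\}$ lies where $H_A\equiv H$, and property~(5) of Theorem~\ref{Thm4.2} gives the $A$-uniform bound $\min_q H(q,p_0)\leq \overline H_A(p_0)\leq \max_q H(q,p_0)$.

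The gap is in the step you yourself flag as ``the main obstacle.'' Your comparison of $\overline H_A$ and $\overline H_B$ goes through monotonicity: since $|H_A-H_B|\leq c_{A,B}$ everywhere, adding and subtracting constants and applying property~(1) gives $|\overline H_A-\overline H_B|\leq c_{A,B}$. But $c_{A,B}$ is the oscillation of $H$ on $\{\|p\|\leq 2B\}$, which blows up as $B\to\infty$ by coercivity, so this estimate cannot yield a Cauchy property. Combining it with the pointwise $A$-uniform bound from property~(5) only bounds $|\overline H_A(p_0)-\overline H_B(p_0)|$ by $\max_q H(q,p_0)-\min_q H(q,p_0)$; boundedness is not convergence, and nothing in these two estimates forces the difference to be (asymptotically) a \emph{constant} function of $p$. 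The sentence ``it forces $\overline H_A-\overline H_B$ to be asymptotically a constant function of $p$'' is an assertion without a supporting argument. The concluding remark that ``a careful use of property~(5) applied to Lagrangians isotopic to $\L_{p_0}$ contained in $\{\|p\|\leq A\}$'' settles the matter also does not close the gap: property~(5) provides only one-sided comparisons, and without a representation formula expressing $\overline H(p_0)$ as an infimum/supremum over such Lagrangians (which is available for Tonelli Hamiltonians via Contreras--Iturriaga--Paternain, as used in the proof of Theorem~\ref{Theorem1}, but not obviously for an arbitrary coercive Hamiltonian, which is the scope of Proposition~\ref{prop9.3}), one cannot conclude that optimizing Lagrangians can be taken inside $\{\|p\|<A\}$. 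Note also that the paper itself does not prove the stabilization here — it explicitly defers to Section~9.1 of Viterbo~\cite{Viterbo2009} — so it is precisely this deferred argument that your proposal would need to supply, and it does not.

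A smaller point: your reduction for independence of the cutoff uses $\min(\chi_A,\tilde\chi_A)\cdot H$ and $\max(\chi_A,\tilde\chi_A)\cdot H$ as lower/upper bounds for both truncations; this sandwich is only correct where $H\geq 0$, and would need to be adjusted (e.g.\ using $\min(\chi_A H,\tilde\chi_A H)$) if $H$ changes sign.
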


For non-autonomous coercive Hamiltonians $H(q,p,t)$, one can reduce to the autonomous case by considering the new Hamiltonian on $\rT^*\T^{n+1}$ given by $$K(q,p,t, \tau):= \tau + H(q,p,t).$$ We refer the reader to Viterbo \cite[Section 9.3]{Viterbo2009} for more details.


\section{Action minimizing properties of convex Hamiltonians}\label{section3}
 
 In this section we want to recall some notions of {Mather's theory of minimal action}. In order to do this, we need to restrict our analysis to a special class of Hamiltonians $H: \rT^*\T^n \times \T \longrightarrow \R$, which are $C^2$, strictly convex and superlinear in the fibers and have complete flows. These Hamiltonians, which  are also called {\it Tonelli Hamiltonians}, play an important role in the study of classical mechanics and provide the setting in which {\it Mather's theory} and {\it Fathi's Weak KAM theory} have been developed (see for instance Mather \cite{Mather91}, Fathi \cite{Fathibook} or Sorrentino \cite{SorLecturenotes}). 

 Let $H$ be a Tonelli Hamiltonian 
 and consider the associated Lagrangian  $L:\rT \T^n \times \T \longrightarrow \R$, which is  
 defined by Legendre duality  by the formula:
 $$L(q,v,t)=\sup \{\langle p, v \rangle -H(q,p,t) \mid p\in {\mathbb R}^n\}.         $$
Recall that the  associated Euler--Lagrange flow $\varphi_L^t$ is obtained as the solution of the equation  $\frac{d}{dt} \frac{\dpr L}{\dpr v}(q,v,t) = \frac{\dpr L}{\dpr q}(q,v,t)$ and it is conjugated, via the Legendre transform
$(q,v,t) \longmapsto \big(q,\frac{\dpr L}{\dpr v}(q,v,t),t\big)$,  to the Hamiltonian flow $\varphi_H^t$. 
Let  $\m$ be a probability measure on ${\rm T}\T^n \times \T$, which is invariant under the Euler--Lagrange flow (\ie $(\varphi_{L}^t)_{*}\mu=\mu$). We define its {\it average action} as
$$ A_L(\m) := \int_{{\rm T}\T^n \times \T} L(q,v,t)\,d\m\,. $$
Let us denote by $\calM(L)$ the space of invariant probability measures on 
${\rm T}\T^n \times \T$, with finite average action.
Given any $\m \in \calM(L)$ we can define its {\it rotation vector} or {\it Schwartzman's asymptotic cycle} as the unique $\rho(\m) \in \rH_1(\T^n;\R)$ that satisfies
$$
\int_{\rT\T^n\times \T} \eta(q,t)\cdot (v,1)\,d\mu = \langle \rho(\mu), [\eta]_{\T^n}\rangle + [\eta]_{\T}
$$
for any closed $1$-form $\eta$ on $\T^n\times \T$, where $[\eta]=([\eta]_{\T^n}, [\eta]_{\T}) \in 
\rH_1(\T^n\times\T;\R)\simeq \rH_1(\T^n;\R)\times \R$ is the de-Rham cohomology of $\eta$.
It is possible to show (see Mather \cite{Mather91}) that the map $\rho: \calM(L) \longrightarrow {\rm H}_1(\T^n;\R)$ is surjective and hence there exist invariant probability measures for each rotation vector.
Let us consider the minimal value of the average action $A_L$ over the set of 
probability measures with a given rotation vector. This minimum exists because of the lower semicontinuity of the action functional on the set $\calM(L)$ for the weak-$*$ topology (see Mather \cite{Mather91}):
\begin{eqnarray}
\b_H: {\rm H}_1(\T^n;\R) &\longrightarrow& \R \nonumber\\
h &\longmapsto& \min_{\m\in\calM(L):\,\rho(\m)=h} A_L(\m)\,. \nonumber
\end{eqnarray}
This function $\beta_H$ is what is generally known as {\it  
$\beta$-function} or {\it effective Lagrangian}.
A measure $\m \in \calM(L)$ realizing such a minimum amongst all invariant probability measures with the same rotation vector, \ie $A_L(\m) = 
\b(\rho(\m))$, is called an {\it action minimizing} {\it measure with rotation vector} $\rho(\mu)$. 
The $\beta$-function is convex  and therefore one can consider its {\it conjugate} 
function (given by Fenchel duality)
$ \a_H: {\rm H}^1(\T^n;\R) \longrightarrow \R $ defined by
\beqano
\a_H(c) &:=& \max_{h\in {\rm H}_1(\T^n;\R)} \left(\langle c,h \rangle - \b_H(h) \right).
\eeqano
This  function  is generally called {\it $\a$-function} or {\it effective Hamiltonian}. See Mather \cite{Mather91} for more details. \\

It turns out that $\a_H$  coincides with $\overline{H}$, \ie the symplectic homogenization of $H$  introduced in section \ref{sectionHomog}.
More precisely, 

\begin{Prop}[{Proposition 10.3 in Viterbo \cite{Viterbo2009}}]
If $H:\rT^*\T^n\times \T \longrightarrow \R$ is a Tonelli Hamiltonian, then $\a_H = \overline{H}$
\end{Prop}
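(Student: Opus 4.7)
The strategy is to establish $\overline{H}(c)=\alpha_H(c)$ pointwise for every $c\in\R^n\simeq \rH^1(\T^n;\R)$ by matching two opposite inequalities; the upper bound comes from a smooth approximation on the subsolution side, while the lower bound must be extracted via asymptotic action-theoretic information. I focus on the autonomous case, since the non-autonomous Tonelli case reduces to it through the trick $K(q,p,t,\tau):=\tau+H(q,p,t)$ already recalled in the excerpt.

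For $\overline{H}(c)\le \alpha_H(c)$: by the Fathi--Siconolfi smoothing theorem, for every $\varepsilon>0$ there exists $u_\varepsilon\in C^\infty(\T^n)$ satisfying $H(q,c+du_\varepsilon(q))\le \alpha_H(c)+\varepsilon$ for all $q\in\T^n$. The graph of $q\mapsto c+du_\varepsilon(q)$ is a smooth Lagrangian submanifold of $\rT^*\T^n$ Hamiltonianly isotopic to $\L_c:=\T^n\times\{c\}$, so Remark \ref{Rem4.3}(i), a direct consequence of property (5) of Theorem \ref{Thm4.2}, yields $\overline{H}(c)\le \alpha_H(c)+\varepsilon$; sending $\varepsilon\to 0$ gives the bound.

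For the reverse inequality $\overline{H}(c)\ge \alpha_H(c)$, my plan is to invoke property (6) of Theorem \ref{Thm4.2}, which identifies $\sup_p\overline{H}$ and $\inf_p\overline{H}$ with the asymptotic values of $c_\pm(\varphi_H^k)/k$. Combined with the classical Mather-theoretic interpretation of $\alpha_H(c)$ as an asymptotic twisted action per unit time over orbits of cohomology class $c$, and after reducing to the relevant pointwise value by twisting $H$ by a closed $1$-form representing $c$ (in the coercive, non-compact setting of Proposition \ref{prop9.3}), one matches $c_+(\varphi_H^k)/k$ to the Mather-theoretic asymptotic action and extracts $\overline{H}(c)\ge \alpha_H(c)$.

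The main obstacle is precisely this second inequality. Smooth supersolutions of $H(q,c+du)\ge \alpha_H(c)-\varepsilon$ do not generally exist (they would correspond to invariant smooth Lagrangian tori at class $c$, absent outside KAM regimes), and the Mather--Aubry set at class $c$, while contained in $\{H=\alpha_H(c)\}$, is only a Lipschitz graph over a possibly proper subset of $\T^n$ by Mather's graph theorem and cannot be promoted to a smooth Lagrangian submanifold on which property (5) could be applied; so the lower bound must instead be extracted indirectly, via the asymptotic action-theoretic content of $c_+(\varphi_H^k)$ as $k\to\infty$.
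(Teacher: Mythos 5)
The paper does not actually contain a proof of this statement; it is quoted verbatim from Viterbo's \emph{Symplectic Homogenization} preprint (Proposition 10.3 there), so there is no in-paper argument to compare yours against. Judged on its own merits, your proposal establishes only the inequality $\overline H(c)\le\alpha_H(c)$, and you correctly diagnose, but do not close, the gap on the other side. The half you do prove is fine, and could in fact dispense with the Fathi--Siconolfi smoothing theorem: the Contreras--Iturriaga--Paternain identity $\alpha_H(c)=\inf_{u\in C^\infty(\T^n)}\sup_q H(q,c+du(q))$, which the paper itself cites, already provides smooth $u_\varepsilon$ with $H(q,c+du_\varepsilon(q))\le\alpha_H(c)+\varepsilon$, and Remark \ref{Rem4.3}(i) then yields $\overline H(c)\le\alpha_H(c)+\varepsilon$.

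The lower bound $\overline H(c)\ge\alpha_H(c)$ is the missing ingredient, and the route you indicate cannot succeed as described. Property (6) of Theorem \ref{Thm4.2} identifies $\lim_k k^{-1}c_\pm(\varphi_H^k)$ only with the \emph{global extrema} $\sup_p\overline H(p)$ and $\inf_p\overline H(p)$; these are unchanged by the fiberwise translation $(q,p)\mapsto(q,p+c)$, so twisting by a closed $1$-form representing $c$ does not localize them to the pointwise value $\overline H(c)$. (The one class where this collapse gives something is the minimizer of $\alpha_H$, which is exactly what the paper exploits in Theorem \ref{Theorem1} -- but that special case does not prove the proposition.) Property (6) is moreover stated for compactly supported Hamiltonians, and the natural bridge back to the Tonelli $H$, namely $\overline{\min\{H,r\}}=\min\{\overline H,r\}$, is the paper's second Lemma in Section 5.1, whose proof already invokes the very proposition you are trying to establish, so that route is circular. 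What is genuinely required for the lower bound is the identification of the minimax critical levels $c(\alpha,\widetilde\Gamma(\rho_k^{-1}\varphi_H^k\rho_k))$ underlying $\overline H$ with the Lax--Oleinik / weak KAM asymptotics of viscosity solutions of $\partial_t u+H(q,c+d_qu)=0$ with linear initial data; that identification is the substance of Viterbo's proof and does not follow from properties (1)--(7) of Theorem \ref{Thm4.2} treated as a black box.
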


\begin{Rem}
\begin{itemize}
\item[{\rm (i)}] Observe that a Tonelli Hamiltonian is of course coercive (since it is superlinear), therefore ${\overline{H}}$ is defined as in Proposition \ref{prop9.3}. Moreover, in the autonomous case we simply have $\overline H(p)=\alpha_{H}(p)$, while for non-autonomous Hamiltonians, we first reduce to the autonomous case by setting $K(q,p,t,\tau)=\tau + H(q,p,t)$ and then observe that  $\overline K (p,\tau)$ is well defined and equal to $\tau + \overline H(p)$ for some function $\overline H$. For this function, we have again $\overline H(p)=\alpha_{H}(p)$. See Viterbo \cite[Section 10.1]{Viterbo2009}.
\item[{\rm (ii)}] It follows from this result and Theorem \ref{Thm4.2} (2) that Mather's $\a$ function is invariant under Hamiltonian symplectomorphisms. This property had already been proved for general symplectomorphisms by Patrick Bernard \cite{Bernard} (see also Sorrentino \cite[Section 4.A]{SorLecturenotes}).
\end{itemize}
\end{Rem}


\section{Main results}\label{sec-mainresults}

In this section we want to study the connection between the  Hofer and $\gamma$-distance on one hand, and Mather's theory of minimal action on the other hand. We shall then state our main results. 
Let us start by observing that Tonelli Hamiltonians clearly do not  belong to $\cH_0$, since they lack compact support, and hence their time-one maps are not element of $\Ham_0(\rT^*\T^n)$. Therefore, we need to restrict them to compact subsets of $\rT^*\T^n$ and, as done by Siburg in 
\cite{Siburg1998},  consider ``nice''  compactly supported extensions, such that the Hofer and $\gamma$- metrics are independent of the choice of the extension. 
In the autonomous case this can be achieved by using the conservation of energy to obtain well-defined ``truncated'' flows  and then smooth them out. We shall then see how the same proof extends to the setting considered in Siburg \cite{Siburg1998}.

Let us consider $H(q,p)$ an autonomous Tonelli Hamiltonian on $\rT^*\T^n$. 
 Then,
 for each $r$ and each sufficiently small $\e>0$, let us consider functions $f_{r, \varepsilon }$ such that: 
\begin{itemize}
\item  $f_{r, \varepsilon }(s)=s$ for $  s  \leq r$; 
\item $f_{r, \varepsilon }(s)\leq r+ \varepsilon$ for all $s\in \R $ and $f_{r, \varepsilon }(s)\equiv r$ for $s\geq r+\e$;
\item $  \vert f'_{r, \varepsilon }(s) \vert \leq 1$ for all $s\in \R$ (this assumption will only be used in section \ref{proofcounterex} to define the Calabi invariant).
\end{itemize}
Now, we can define new Hamiltonians, given by $H_{r, \varepsilon }=f_{ r, \varepsilon }(H)$. If we denote by
$S_r:= \{(q,p)\in \rT^*\T^n:\; H(q,p)\leq r\}$, then our new Hamiltonians will satisfy the following conditions:
	\begin{itemize}
	\item $H_{r,\e}-r$ is supported  in  $S_{r+\e}$;
	\item $H_{r,\e} $ coincides with $H$ on $S_r$;
	\item $H_{r,\e}$ is bounded everywhere by  $ r+\varepsilon $ and satisfies the condition $ \| d_pH_{r,\varepsilon }\| \leq C$, where $C= \sup_{S_{r+\e}}\Vert d_pH \Vert$.
	\end{itemize}

We shall denote the set of all these possible extensions by $\cH_{r,\e}(H)$. We can now define Hofer and $\g$-distances from the identity: 
\begin{itemize}
\item[-] {\bf Hofer distance.}  We set 
$$
		d^H({\rm id}, \varphi; S_{r}) :=\lim_{ \varepsilon \to 0 } d^H(\id , \varphi_{{H}_{r, \varepsilon }}).
$$ 
		
		The above limit is well defined since if $H_{r,\e}$ and $K_{r,\e}$ are two different extensions of $H$ in $\cH_{r,\e}(H)$, then
		$\Vert H_{r, \varepsilon }- K_{r, \varepsilon } \Vert _{C^0} \leq 2\varepsilon $.\\

		Note that $d^H(\id , \varphi ; S_{r})$  only   depends on $S_{r}$ and $\varphi$,  and not on $H$. We could also  take a non-autonomous Hamiltonian, possibly non-convex, generating $\varphi$, provided it coincides with $H$ near $S_{r}$. Indeed, the time one flow of  $\varphi^t_{{H}_{r, \varepsilon }}=\varphi_{H}^{t f'_{r, \varepsilon }(H)}$ is determined by the knowledge of $\varphi$ inside $S_{r}$ and of $\varphi^t_{H}$ in   $S_{r+ \varepsilon }\setminus S_{r}$. But the latter is determined by the hypersurfaces $\partial S_{s}$ for $r \leq s \leq r+ \varepsilon $.  In any case we have the following lower bound: if $\varphi \neq \id$ there is a ball in $S_{r}$ such that $\varphi (B)\cap B=\emptyset$ and then  $d^H({\rm id}, \varphi; S_{r}) \geq c(B)>0$, where $c(B)$ is the Ekeland--Hofer capacity of $B$ (cf \cite{Ekeland-Hofer}).
		
\item[-] {\bf $\g$-distance.}  It can be defined as:
	  $$\gamma(\id, \varphi;S_{r}):=\lim_{ \varepsilon \to 0} \gamma( \id, \varphi_{H_{ r,\varepsilon }}).$$
            Also this limit is well defined, for the same reasons as above. In fact,             
            if $H_{r,\e}$ and $K_{r,\e}$ are two different extensions of $H$ in $\cH_{r,\e}(H)$, then
	   $$\g(\varphi^1_{H_{r,\e}},\varphi^1_{K_{r,\e}}) \leq \Vert H_{r, \varepsilon }- K_{r, \varepsilon } \Vert _{C^0} \leq 2\varepsilon.$$
	   Similarly, $c_{\pm}(\varphi;S_r):= \lim_{\varepsilon \to 0} c_{\pm}(\varphi_{H_{ r,\varepsilon }})$.
\end{itemize}

The same argument as before shows that $\gamma (\id, \varphi;S_{r})$ depends only on $S_{r}$ and $\varphi$, and we have the same lower bound as for $d^H$. 
Analogously one can also define the associated asymptotic quantities $d^H_{\infty}$, $\g_{\infty}$ and
$c_{\pm,\infty}$.\\

We can now state our first result.

\begin{Teo}\label{Theorem1}
Let $\varphi$ be the flow of an autonomous Tonelli  Hamiltonian $H:\rT^*\T^n  \longrightarrow \R$ and let $S_{r}\equiv 
 \{(q,p)\in\rT^*\T^n:\; H(q,p)\leq r \} $. Then, for each $r>\inf_{p \in {\mathbb R}^n} \alpha_{H}(p)$: 
 $$
\g_{\infty}(id,\varphi; S_{r}) = r+\b_H(0).
$$
More precisely,
$c_{-,\infty}(\varphi ; S_{r})= \inf_{p \in {\mathbb R}} {\overline H}(p)=-\beta_{H}(0)$ and $c_{+,\infty}(\varphi ; S_{r})=r.$
\end{Teo}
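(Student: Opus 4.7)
The plan is to apply Theorem \ref{Thm4.2} to the family $\{H_{r,\varepsilon}\}_{\varepsilon}$ and pass to the limit $\varepsilon \to 0$. First I would invoke property~(6) of Theorem \ref{Thm4.2}, which yields
\[
c_{+,\infty}(\varphi_{H_{r,\varepsilon}}) \;=\; \sup_p \overline{H_{r,\varepsilon}}(p), \qquad c_{-,\infty}(\varphi_{H_{r,\varepsilon}}) \;=\; \inf_p \overline{H_{r,\varepsilon}}(p),
\]
so the theorem reduces to computing the limits of $\sup_p \overline{H_{r,\varepsilon}}(p)$ and $\inf_p \overline{H_{r,\varepsilon}}(p)$ as $\varepsilon \to 0$. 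Recall that $\inf_p \overline{H}(p) = \inf_p \alpha_H(p) = -\beta_H(0)$ by the identification $\alpha_H = \overline{H}$ and Fenchel duality.

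For the supremum, monotonicity (property~(1)) combined with $H_{r,\varepsilon} \leq r+\varepsilon$ gives $\overline{H_{r,\varepsilon}} \leq r+\varepsilon$. Conversely, coercivity of $H$ ensures that $\T^n \times \{p_0\} \subset \{H \geq r+\varepsilon\}$ for $\|p_0\|$ sufficiently large, where $H_{r,\varepsilon} \equiv r$; property~(5) then gives $\overline{H_{r,\varepsilon}}(p_0) = r$. Hence $\sup_p \overline{H_{r,\varepsilon}}(p) \in [r,\, r+\varepsilon]$, which tends to $r$.

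Set $E_0 := -\beta_H(0)$ and pick $c^* \in \R^n$ with $\alpha_H(c^*) = E_0$; by hypothesis $E_0 < r$. For the upper bound on the infimum I would use smooth subsolutions: by Fathi--Siconolfi regularization, for any $\delta > 0$ there exists $u_\delta \in C^\infty(\T^n)$ with $H(q,\, c^* + du_\delta(q)) \leq E_0 + \delta$; for $\delta < r - E_0$ the Lagrangian graph $\{(q, c^* + du_\delta(q)) : q \in \T^n\}$ lies inside $\mathrm{int}\,S_r$, where $H_{r,\varepsilon}$ coincides with $H$, and property~(5) yields $\overline{H_{r,\varepsilon}}(c^*) \leq E_0 + \delta \to E_0$.

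The main obstacle is the matching lower bound, namely $\overline{H_{r,\varepsilon}}(p) \geq E_0 + o(1)$ \emph{uniformly in $p$}. My approach is to compare $H_{r,\varepsilon}$ with $\min(H, r)$: the Lipschitz assumption $|f'_{r,\varepsilon}| \leq 1$ implies $H_{r,\varepsilon} \geq \min(H, r) - \varepsilon$ pointwise, so by monotonicity it suffices to establish
\[
\overline{\min(H, r)}(p) \;=\; \min(\alpha_H(p),\, r) \qquad \text{for every } p \in \R^n,
\]
the infimum of which equals $\min(E_0, r) = E_0$. Now $\min(H, r)$ and $(H-r)_+$ are both functions of $H$ and hence Poisson-commute, so the quasi-state property~(7) yields $\overline{\min(H, r)} + \overline{(H-r)_+} = \overline{H} = \alpha_H$. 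It remains to identify $\overline{(H-r)_+}(p) = (\alpha_H(p) - r)_+$, and this I would do via a two-sided argument: applying the smooth-subsolution construction at an arbitrary $p$ gives $(H-r)_+ \leq (\alpha_H(p) + \delta - r)_+$ on the Lagrangian graph, hence $\overline{(H-r)_+}(p) \leq (\alpha_H(p) - r)_+$ by property~(5), while the pointwise inequality $(H-r)_+ \geq H - r$ combined with monotonicity gives $\overline{(H-r)_+}(p) \geq \alpha_H(p) - r$. Combining everything yields $\lim_{\varepsilon \to 0} \inf_p \overline{H_{r,\varepsilon}}(p) = E_0 = -\beta_H(0)$, which completes the proof.
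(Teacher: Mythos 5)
Your proof is correct and it identifies the same heart of the matter as the paper — everything reduces, via Theorem \ref{Thm4.2}(6) and the uniform approximation $\|H_{r,\varepsilon}-\min(H,r)\|_{C^0}\le\varepsilon$, to the identity $\overline{\min(H,r)}=\min(\overline{H},r)$ — but you reach that identity by a genuinely different route. The paper proves the much more general calculus rule $\overline{f(H)}=f(\overline{H})$ for $f\in C^0(\R,\R)$ and $H$ convex superlinear (Lemma 5.3): first for $f$ smooth convex increasing via the Contreras--Iturriaga--Paternain inf--sup formula, then for arbitrary continuous $f$ by writing it as a difference of convex increasing functions (which, post-composed with $H$, Poisson-commute) and using density of such differences. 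You instead decompose $H=\min(H,r)+(H-r)_+$, invoke quasi-state additivity (property (7)) for this Poisson-commuting pair, and then squeeze $\overline{(H-r)_+}(p)=(\alpha_H(p)-r)_+$ from above by evaluating on the Lagrangian graph of an approximate smooth subsolution and applying property (5), and from below by monotonicity applied to $(H-r)_+\ge H-r$. Both proofs rest on the same ingredients — the subsolution characterization of $\alpha_H$, quasi-state additivity, and properties (1), (5), (6) — but the paper's route yields the reusable and more flexible lemma $\overline{f(H)}=f(\overline{H})$, whereas yours is more elementary and avoids the density argument entirely. Two small points to tighten: for the lower squeeze you also need $(H-r)_+\ge 0$, giving $\overline{(H-r)_+}\ge 0$, before you can conclude $\ge(\alpha_H(p)-r)_+$ (you only cite $(H-r)_+\ge H-r$, which alone gives $\ge\alpha_H(p)-r$); and, like the paper, you silently apply properties (1), (5), (7) to non-smooth, non-compactly-supported coercive Hamiltonians such as $(H-r)_+$ and $\min(H,r)$, which formally requires the extension of Proposition \ref{prop9.3} and a further approximation by smooth Hamiltonians. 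Also, the Fathi--Siconolfi subsolution step in your argument for the upper bound on $\inf_p\overline{H_{r,\varepsilon}}(p)$ is redundant once you have $\overline{\min(H,r)}=\min(\alpha_H,r)$: since $f_{r,\varepsilon}(s)\le s$ one has $H_{r,\varepsilon}\le H$, so monotonicity already gives $\inf_p\overline{H_{r,\varepsilon}}(p)\le\inf_p\alpha_H(p)=-\beta_H(0)$. None of these are serious gaps.
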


Observe now that using Proposition \ref{confrontodistanze}, we obtain the following result.

\begin{Cor}\label{generalizationpropsiburg}
Let $\varphi$ be a Hamiltonian diffeomorphism generated by  a Tonelli Hamiltonian
 $H:\rT^*\T^n \longrightarrow \R$. Then, for each $r>\inf_{p \in {\mathbb R}^n} \alpha_{H}(p)$: 
 $$
d^H_{\infty}(\id, \varphi; S_{r})  \geq r+\b_H(0).
$$
\end{Cor}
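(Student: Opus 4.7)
The plan is to deduce the corollary directly from Theorem \ref{Theorem1} together with the inequality $\gamma \le d^H$ supplied by Proposition \ref{confrontodistanze}. The only point requiring care is that Proposition \ref{confrontodistanze} is stated for compactly supported Hamiltonian diffeomorphisms in $\Ham_0(\rT^*\T^n)$, whereas $H$ is only coercive; the passage is made through the truncations $H_{r,\e}=f_{r,\e}(H)$ introduced in Section \ref{sec-mainresults}.

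Concretely, I would fix $r>\inf_{p\in\R^n}\alpha_H(p)$ and $k\ge 1$. For every sufficiently small $\e>0$, the Hamiltonian $H_{r,\e}$ belongs to $\cH_0$, so its time-one map $\varphi_{H_{r,\e}}$ lies in $\Ham_0(\rT^*\T^n)$. Applying Proposition \ref{confrontodistanze} to the $k$-th iterate gives
$$
\gamma(\id,\varphi_{H_{r,\e}}^k)\ \le\ d^H(\id,\varphi_{H_{r,\e}}^k).
$$
Since $H$ is autonomous, its flow preserves the hypersurfaces $\partial S_s$; hence $\varphi_{H_{r,\e}}^k$ coincides with the time-$k$ map of $H_{r,\e}$ and, on the open set $\mathrm{int}(S_r)$, with the time-$k$ map of $H$ itself. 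The two distances $\gamma(\id,\cdot;S_r)$ and $d^H(\id,\cdot;S_r)$ are constructed precisely so as to be limits of $\gamma(\id,\varphi_{H_{r,\e}}^k)$ and $d^H(\id,\varphi_{H_{r,\e}}^k)$ as $\e\to 0$, the existence of these limits being guaranteed by the $C^0$-estimate $\|H_{r,\e}-K_{r,\e}\|_{C^0}\le 2\e$ valid for any two extensions. Letting $\e\to 0$ therefore produces
$$
\gamma(\id,\varphi^k;S_r)\ \le\ d^H(\id,\varphi^k;S_r).
$$

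Dividing by $k$, sending $k\to\infty$, and invoking Theorem \ref{Theorem1} to identify the left-hand side, I obtain
$$
r+\beta_H(0)\ =\ \gamma_\infty(\id,\varphi;S_r)\ \le\ d^H_\infty(\id,\varphi;S_r),
$$
which is the statement of the corollary. There is no substantive obstacle: the entire content is packaged in Theorem \ref{Theorem1} and in the comparison $\gamma\le d^H$, and the only delicate point—the interchange of the limit in $\e$ with the comparison inequality—follows at once from the Lipschitz continuity of both distances with respect to the $C^0$-norm of the generating Hamiltonian.
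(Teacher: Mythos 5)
Your argument is correct and follows the same route as the paper: Corollary \ref{generalizationpropsiburg} is stated there as an immediate consequence of Theorem \ref{Theorem1} combined with the comparison $\gamma\le d^H$ of Proposition \ref{confrontodistanze}. You have merely spelled out the limiting step (apply the comparison to the truncated maps $\varphi_{H_{r,\e}}^k$, let $\e\to 0$ using the $C^0$-Lipschitz property of both distances, then pass to $k\to\infty$), which the paper leaves implicit; the substance is identical.
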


The method used to prove Theorem  \ref{Theorem1} allows us to provide a new proof of Siburg's result in 
\cite{Siburg1998} (see also Iturriaga and Sanchez \cite{IturriagaSanchez} for a generalization to general cotangent bundles). 
 Let  $B^*\T^n$ denote the unit ball cotangent bundle of $\T^n$, \ie $B^*\T^n:=\{(x,p):\; \|p\|\leq 1 \}$. Siburg  considered the set of admissible Hamiltonians
$\cH_S$ consisting of all smooth convex Hamiltonians $H: B^*\T^n \times \T \longrightarrow \R$ that satisfy the following two conditions:
\begin{itemize}
\item[{\small S1})] $H$ vanishes on the boundary of $B^*\T^n$, \ie $H(q,p,t)=0$ if $\|p\|=1$;
\item[{\small S2})] $H$ admits a smooth extension $K_H: \rT^*\T^n\times \T \longrightarrow \R$ that is of Tonelli type and depends only on $t$ and $\|p\|^2$ outside $B^*\T^n\times \T$. 
\end{itemize}
Then, he defined the following group:
$$
\Ham_S(B^*\T^n):= \{\varphi: B^*\T^n \longrightarrow B^*\T^n \; \big| \; \varphi=\varphi_H^1\; \mbox{for some}\; H\in\cH_S   \}.
$$

Observe that we cannot apply directly Theorem \ref{Theorem1} in this setting, since these Hamiltonian diffeomorphisms are not necessarily generated by autonomous Hamiltonians. However, due to the special form of the Hamiltonians, the above results remain true.\\

\begin{Prop}\label{ex-corollary}
Let $\varphi \in \Ham_S(B^*\T^n)$. Then:
$$
d^H_{\infty}(\id, \varphi) \geq \gamma_{\infty}(\id, \varphi) = \b_H(0).
$$
More precisely,
$c_{-,\infty}(\varphi)= \inf_{p \in {\mathbb R}} {\overline H}(p)=-\beta_{H}(0)$ and $c_{+,\infty}(\varphi)=0.$\\
\end{Prop}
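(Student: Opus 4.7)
The first inequality $d^H_\infty(\id,\varphi) \geq \gamma_\infty(\id,\varphi)$ is immediate from Proposition \ref{confrontodistanze}, so the plan reduces to establishing the two identities $c_{-,\infty}(\varphi) = -\beta_H(0)$ and $c_{+,\infty}(\varphi) = 0$. My strategy is to imitate the argument of Theorem \ref{Theorem1} at the ``formal level'' $r=0$: because $H$ is convex and vanishes on $\partial B^*\T^n$, one has $H \leq 0$ throughout $B^*\T^n$, so $\{K_H \leq 0\}$ essentially coincides with $B^*\T^n$ for the Tonelli extension $K_H$, and the asymptotic invariants are captured by compactly supported truncations concentrating near this set.

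First I would introduce $H_\e := f_{0,\e}(K_H)$, using the cut-off from Section \ref{sec-mainresults}. Then $H_\e$ is smooth, compactly supported modulo an additive constant of size $O(\e)$, agrees with $H$ on $B^*\T^n$, and satisfies $\|H_\e - H_{\e'}\|_{C^0} = O(\e+\e')$, so by the $C^0$-continuity of $c_\pm$ one obtains $c_{\pm,\infty}(\varphi) = \lim_{\e\to 0} c_{\pm,\infty}(\varphi_{H_\e}^1)$. After the standard autonomization $(q,p,t,\tau)\mapsto \tau + H_\e(q,p,t)$ when $H_\e$ is time-dependent, Theorem \ref{Thm4.2}(6) gives
$$c_{+,\infty}(\varphi_{H_\e}^1) = \sup_{p\in\R^n}\alpha_{H_\e}(p), \qquad c_{-,\infty}(\varphi_{H_\e}^1) = \inf_{p\in\R^n}\alpha_{H_\e}(p).$$

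For the infimum, first note that $\beta_H(0)\geq 0$: the Legendre dual satisfies $L(q,0,t) = -H(q,0,t) \geq 0$, so every invariant measure has non-negative action. The upper bound $\inf_p\alpha_{H_\e}(p)\leq -\beta_H(0)$ I would obtain as follows. The cut-off satisfies $f_{0,\e}(s)\leq s$ for every $s\in\R$ (from $f_{0,\e}(0)=0$ and $|f_{0,\e}'|\leq 1$), so $H_\e\leq K_H$ and every smooth $K_H$-subsolution at level $c$ is also an $H_\e$-subsolution at the same level; since smooth $K_H$-subsolutions exist at every $c > \alpha_{K_H}(p^*) = -\beta_H(0)$ by weak KAM theory applied to the Tonelli Hamiltonian $K_H$, Remark \ref{Rem4.3}(i) then forces $\alpha_{H_\e}(p^*)\leq -\beta_H(0)$. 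The matching lower bound will follow by contradiction: any smooth $H_\e$-subsolution at level $c < -\beta_H(0)\leq 0$ must have its Lagrangian graph trapped in $\{K_H<0\}\subset B^*\T^n$ (because $H_\e \geq 0$ wherever $K_H\geq 0$, by the construction of $f_{0,\e}$), where $H_\e = K_H$, so it would also be a $K_H$-subsolution at level $c<\min_p\alpha_{K_H}$, contradicting the defining property of $\alpha_{K_H}$.

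For the supremum, monotonicity (Theorem \ref{Thm4.2}(1)) gives $\alpha_{H_\e}\leq \e$ at once since $H_\e\leq \e$. For the matching lower bound I would exploit the radial structure of $K_H$ outside $B^*\T^n$: choose $p_0$ with $\|p_0\|$ large enough that $K_H(\cdot,p_0,\cdot)\geq \e$, so that the torus $\L_{p_0}=\T^n\times\{p_0\}$ is invariant and $H_\e\equiv 0$ on it; Theorem \ref{Thm4.2}(5) then gives $\alpha_{H_\e}(p_0)=0$, hence $0\leq \sup_p\alpha_{H_\e}(p)\leq \e$. Sending $\e\to 0$ yields $c_{+,\infty}(\varphi)=0$, and combined with the previous step $c_{-,\infty}(\varphi)=-\beta_H(0)$, whence $\gamma_\infty(\id,\varphi)=\beta_H(0)$. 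The main technical obstacle is the trapping step in the lower bound on the infimum, which depends sensitively on the sign structure of $f_{0,\e}$ and on the identification $\{H\leq 0\} = B^*\T^n$; the remaining passages (autonomization of the time-dependent case via $\rT^*\T^{n+1}$, and the $C^0$-continuity of $c_\pm$) are routine applications of the framework of Sections \ref{section2} and \ref{sectionHomog}.
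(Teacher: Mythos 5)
The overall skeleton of your argument matches the paper's: reduce $\gamma_\infty$ to $\lim_{\e\to 0}\sup\overline{H}_\e - \lim_{\e\to 0}\inf\overline{H}_\e$ via Theorem~\ref{Thm4.2}(6), and then sandwich these quantities. Your treatment of the supremum is correct and mildly different from the paper's (you pick a far-away flat torus $\L_{p_0}$ with $K_H\geq\e$ and apply Theorem~\ref{Thm4.2}(5), whereas the paper first shows $\overline{K}_H\equiv 0$ on $\partial\U_1$ and imports this to $\overline{H}_\e$); both work. Your upper bound on the infimum is also fine (monotonicity $H_\e\leq K_H$ already gives $\overline{H}_\e\leq\overline{K}_H$ directly, without needing subsolutions at all).

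The genuine gap is in the lower bound $\inf_p\overline{H}_\e(p)\geq -\beta_H(0)$. You argue by contradiction, assuming a point where $\overline{H}_\e$ is $< -\beta_H(0)$ and then reasoning about a smooth $H_\e$-subsolution trapped in $\{K_H<0\}$. But to produce such a subsolution you would need the inf-sup characterization
$\overline{H}_\e(p) = \inf_{u}\sup_{q}H_\e(q,p+du(q))$, which in the paper (Lemma~5.6, via Contreras--Iturriaga--Paternain) is available only for \emph{convex superlinear} Hamiltonians. The truncation $H_\e=f_{0,\e}(K_H)$ is compactly supported, hence neither convex nor superlinear, so you have no mechanism to extract a subsolution from the knowledge that $\overline{H}_\e(p)$ is small; Remark~\ref{Rem4.3}(i) only runs in the other direction (subsolution $\Rightarrow$ bound on $\overline{H}$). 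The paper's proof avoids this trap: it shows $\overline{H}_\e(p)=\overline{K}_H(p)$ for $p\in\U_1$ by the Poisson-commutation argument of Lemma~\ref{Lemma1} (using $\{H_\e,K_H\}=0$ and Theorem~\ref{Thm4.2}(7), which needs no convexity of $H_\e$), and for $p\notin\U_1$ it bounds $\overline{H}_\e(p)\geq 0$ directly from Theorem~\ref{Thm4.2}(5) applied to the flat torus $\T^n\times\{p\}$ on which $H_\e\geq 0$. You should replace your contradiction argument with this two-region comparison; the rest of your proposal then goes through.
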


\begin{Rem}
({\it i}) The fact that $\beta_{H}(0)$ is independent of the extension $K_H$, has been proven in Siburg \cite[Lemma 4.1]{Siburg1998}. Therefore, we define $\beta_H(0):=\beta_{K_H}(0)$. Observe that the independence could be also deduced from the above proposition and the fact that the $\g$-distance depends only on $\varphi$.\\
\noindent ({\it ii})  The inequality \beqa{Siburginequality} d^H_{\infty}(\id, \varphi) \geq \b_{H}(0) \eeqa  is due to Siburg \cite[Theorem 5.1]{Siburg1998}. 
\end{Rem}

This lower bound (\ref{Siburginequality}) induced Siburg to ask the following question:\\

\noindent
{\bf Question}  {\cite[page 94]{Siburg1998}:} does equality hold in (\ref{Siburginequality})?\\

We shall  construct examples  of convex Hamiltonian diffeomorphisms for which the asymptotic Hofer distance from the identity is strictly greater than the asymptotic $\g$-distance.

\begin{Teo}\label{counterex}
There exists $\varphi \in \Ham_S(B^*\T^n)$, such that
$$
\gamma_{\infty}(\id, \varphi) <  d^H_{\infty}(\id, \varphi)\,.
$$
\end{Teo}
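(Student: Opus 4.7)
The plan is to exhibit an explicit $\varphi \in \Ham_S(B^*\T^n)$ whose iterates accumulate Hofer length faster than the $\beta$-function predicts. Proposition~\ref{ex-corollary} already identifies $\gamma_\infty(\id,\varphi_H) = \beta_H(0)$ for every $H \in \cH_S$, so the task reduces to producing $H \in \cH_S$ with $d^H_\infty(\id,\varphi_H) > \beta_H(0)$. The extra Hofer length will be detected by the Calabi invariant, a volume-type homomorphism on the group of Hamiltonian diffeomorphisms compactly supported in a topological ball, invisible to the spectral $\gamma$-distance; the third assumption $|f'_{r,\varepsilon}| \le 1$ on the cutoffs in Section~\ref{sec-mainresults} is imposed precisely to keep the truncated Hamiltonians under control when computing Calabi.

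I would take an integrable autonomous $H_0 \in \cH_S$ depending only on $p$, strictly convex and vanishing on $\partial B^*\T^n$, chosen so that $\varphi_{H_0}^t$ fixes the zero section pointwise (for instance $H_0(p)=\tfrac12(|p|^2-1)$ suitably smoothed near $\partial B^*\T^n$), and perturb by a $C^2$-small smooth bump $V$ supported in a topological ball $B \subset B^*\T^n$ disjoint from the zero section and from the boundary, with $\int_B V\,d\omega^n \neq 0$. For $\|V\|_{C^2}$ small the sum $H := H_0 + V$ remains strictly convex in $p$ and hence belongs to $\cH_S$. Because $V$ vanishes near the zero section, the uniform measure on the zero section is still Euler--Lagrange invariant with rotation vector $0$ for $L_H$, and its Lagrangian action depends only on $H_0$; together with standard lower bounds from Mather theory this pins $\beta_H(0)$ down to the explicit constant $\beta_{H_0}(0)$.

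For the Hofer lower bound I introduce the corrector $\chi_k := (\varphi_{H_0}^k)^{-1} \circ \varphi_H^k$. The composition law for Hamiltonian flows identifies $\chi_k$ as the time-$k$ flow of the time-dependent Hamiltonian $V \circ \varphi_{H_0}^t$, and invariance of $\omega^n$ under $\varphi_{H_0}^t$ gives
$$
\mathrm{Cal}(\chi_k) = \int_0^k\int V\circ\varphi_{H_0}^t\,d\omega^n\,dt = k\int V\,d\omega^n.
$$
If $\chi_k$ can be arranged to remain supported in a fixed topological ball $U$ of finite volume, the classical Calabi--Hofer inequality $|\mathrm{Cal}(\chi_k)| \le \mathrm{Vol}(U)\,d^H(\id,\chi_k)$ together with the triangle inequality yields
$$
d^H(\id,\varphi_H^k) \;\ge\; \frac{k\bigl|\int V\,d\omega^n\bigr|}{\mathrm{Vol}(U)} \;-\; d^H(\id,\varphi_{H_0}^k).
$$
Provided $d^H(\id,\varphi_{H_0}^k) = o(k)$, dividing by $k$ and letting $k\to\infty$ produces $d^H_\infty(\id,\varphi_H) \ge |\int V\,d\omega^n|/\mathrm{Vol}(U)$, which by suitable choice of the parameters of $V$ strictly exceeds $\beta_H(0)$.

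The main obstacle is the double hypothesis used in the last step. Strict convexity of $H_0$ prevents $\nabla H_0$ from being locally constant, so for generic $H_0$ the orbit $\bigcup_{0 \le t \le k}\varphi_{H_0}^{-t}(B)$ of the support of $V$ under the reference flow spreads around $\T^n$ and the supporting ball $U = U_k$ grows in volume with $k$, degrading the Calabi estimate; simultaneously $d^H(\id,\varphi_{H_0}^k)$ is \emph{a priori} only linearly bounded in $k$. Reconciling strict convexity of $H_0$ with a rationally-sloped (hence eventually periodic) profile on a neighbourhood of $B$, while still being Tonelli on the rest of $\rT^*\T^n$, is the delicate technical point that distinguishes a successful construction from a naive one; once this compatibility is arranged, the Calabi argument above produces the desired gap $\gamma_\infty(\id,\varphi) < d^H_\infty(\id,\varphi)$.
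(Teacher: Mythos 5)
Your instinct to use the Calabi invariant as the quantity that the Hofer metric sees but the spectral $\gamma$-metric ignores is exactly right, and this is indeed the key idea of the paper's proof. However, the corrector scheme you build around it does not close, and the two obstacles you flag at the end are genuinely fatal, not merely technical.

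First, the hypothesis $d^H(\id,\varphi_{H_0}^k)=o(k)$ cannot be arranged inside $\cH_S$. Siburg's inequality (your Proposition~\ref{ex-corollary}, or \cite[Theorem 5.1]{Siburg1998}) gives $d^H_\infty(\id,\varphi_{H_0})\ge\beta_{H_0}(0)$, and since any $H_0\in\cH_S$ is convex, non-positive on $B^*\T^n$ and vanishing on the boundary, $\beta_{H_0}(0)=-\min_p\overline{H_0}(p)>0$ unless $H_0\equiv 0$. So $d^H(\id,\varphi_{H_0}^k)$ grows linearly in $k$ with a nonzero coefficient, and after dividing your inequality $d^H(\id,\varphi_H^k)\ge k|\int V\,\omega^n|/\mathrm{Vol}(U_k)-d^H(\id,\varphi_{H_0}^k)$ by $k$, the right-hand side is $\le 0$ whenever $V$ is small and $U_k$ has comparable volume. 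Second, precisely because $H_0$ is strictly convex, $\nabla H_0$ varies, so the backward orbit of the support of $V$ under $\varphi_{H_0}^{-t}$ shears and wraps around $\T^n$; the ball $U_k$ containing $\mathrm{supp}\,\chi_k$ grows, degrading the Calabi bound, and as you note one cannot make the slope rationally constant near $\mathrm{supp}\,V$ without breaking strict convexity. These two effects are not independent difficulties to be engineered around: together they make the corrector estimate yield nothing.

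The paper avoids all of this by applying the Calabi lower bound directly to $\varphi$ itself, with no reference flow to subtract off. The elementary inequality $\mathrm{Osc}(H_t)\ge\frac{1}{\mathrm{Vol}}\left|\int H_t\,\omega^n\right|$ on the ball of radius $\rho(\varepsilon)$, integrated in $t$ and minimized over generating Hamiltonians, gives $d^H(\id,\varphi)\ge\frac{1}{\mathrm{Vol}(B^*\T^n)}|\mathrm{Cal}(\varphi)|$, and homogeneity of Calabi under iteration passes this to $d^H_\infty$. Then one chooses $H(q,p)=\g(q)(\|p\|^2-1)$ with $\g$ equal to a large constant $C$ on a small cube $U_\delta$ and a small constant $c$ outside $U_{2\delta}$: the Calabi invariant is of order $\delta^nC$, while $\overline H(p)$ is bounded by $c$ for every $p$, which one proves by producing a Lagrangian graph $\{(q,p+df(q))\}$ with $\|p+df\|=1$ on $U_{2\delta}$ so that $H$ vanishes there and is bounded by $c$ elsewhere. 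Thus $\gamma_\infty=\mathrm{Osc}(\overline H)\le c$ can be made strictly smaller than $\delta^n C k/\mathrm{Vol}(B^*\T^n)\le d^H_\infty$. The geometric point your scheme misses is that $\beta_H(0)$ is sensitive only to the cheapest Lagrangian of cohomology class reaching rotation vector $0$ -- which can route around the expensive region -- whereas Calabi integrates over the full phase volume and must see the bump.
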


An easy consequence of Theorem \ref{counterex}  and Proposition \ref{ex-corollary} is that the above  question has a negative answer.

\begin{Cor}\label{answerSiburg}
There exists $\varphi \in \Ham_S(B^*\T^n)$ generated by  convex Hamiltonian $H$, such that
$$
d^H_{\infty}(\id, \varphi) > \beta_H(0)\,.
$$
\end{Cor}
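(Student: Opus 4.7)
The plan is very short, since this corollary is stated as an immediate synthesis of the two preceding results. I would simply apply Theorem \ref{counterex} to obtain a witness $\varphi \in \Ham_S(B^*\T^n)$, and then use Proposition \ref{ex-corollary} to convert the statement about the $\gamma$-distance into one about $\beta_H(0)$.

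More precisely, first I would invoke Theorem \ref{counterex} to produce $\varphi \in \Ham_S(B^*\T^n)$ with $\gamma_\infty(\id,\varphi) < d^H_\infty(\id,\varphi)$. By definition of $\Ham_S(B^*\T^n)$, this $\varphi$ is the time-one map of some convex Hamiltonian $H \in \cH_S$, so I can take $H$ to be the generator provided by the theorem. Second, I would apply Proposition \ref{ex-corollary} to this same $\varphi$ to identify $\gamma_\infty(\id,\varphi) = \beta_H(0)$. Concatenating,
\[
d^H_\infty(\id,\varphi) > \gamma_\infty(\id,\varphi) = \beta_H(0),
\]
which is exactly the claim.

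Since the real mathematical content is contained in Theorem \ref{counterex} and Proposition \ref{ex-corollary}, there is essentially no obstacle at this stage; the only minor subtlety is to make sure the extension/normalization conventions used in defining $\beta_H(0)$ (via the extension $K_H$) are the same ones used when invoking Proposition \ref{ex-corollary}. This has already been addressed in Remark (\textit{i}) following Proposition \ref{ex-corollary}, where $\beta_H(0)$ is shown to be independent of the chosen Tonelli extension $K_H$, so there is no ambiguity in the quantity appearing on the right-hand side.
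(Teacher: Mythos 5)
Your proposal is correct and is exactly the argument the paper intends: Theorem \ref{counterex} supplies $\varphi$ with $\gamma_\infty(\id,\varphi) < d^H_\infty(\id,\varphi)$, and Proposition \ref{ex-corollary} identifies $\gamma_\infty(\id,\varphi) = \beta_H(0)$, giving the strict inequality. The paper itself presents the corollary as an immediate consequence of these two results, so there is nothing to add.
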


\subsection{Proof of Theorem \ref{Theorem1}}
	
\begin{proof} [Proof of Theorem 1]	
Using Theorem \ref{Thm4.2} (6), we obtain
\beqano c_{-,\infty}(\varphi_{H}; S_{r}) &=&  \lim_{\e\to 0} c_{-,\infty}(\varphi^1_{H_{r,\e}}) =\\
&=& \lim_{\e\to 0} \inf_{p \in \R^n} \overline{H}_{r,\e}(p).
\eeqano

Similarly,
\beqano
 c_{+,\infty}(\varphi_{H}; S_{r}) &=&  \lim_{\e\to 0} c_{+,\infty}(\varphi^1_{H_{r,\e}}) =\\
&=& \lim_{\e\to 0} \sup_{p \in \R^n} \overline{H}_{r,\e}(p).
\eeqano

Let $H_r:= \min \{H,r\}$. Observe that since $ \Vert H_{r, \varepsilon }- H_{r} \Vert_{C^0} \leq \varepsilon$, then $
\lim_{ \varepsilon \to 0}  {\overline H}_{r, \varepsilon } =  \overline{H}_r$ uniformly. We want to prove that  $\inf_{p\in {\mathbb R}^n} {\overline H}_{r}(p)=\inf_{p\in {\mathbb R} ^n} {\overline H}(p)$. Clearly we have $\inf_{p}{\overline H}_{r}(p)\leq \inf_{p}{\overline H}(p)$, since $H_{r}\leq H$.  We thus have to prove that
$\inf_{p}{\overline H}_{r}(p)\geq  \inf_{p}{\overline H}(p)$.

We shall need the following lemmata.

\begin{Lem}\label{Lemma1}
Let $f_{r, \varepsilon }$ be a function such that $f_{r, \varepsilon }(x)=x$ for $ \vert x \vert \leq r$. Let us consider a Hamiltonian $H:\rT^*\T^n\longrightarrow \R$ and set $H_{r, \varepsilon }=f_{r, \varepsilon }(H)$.
If there exists a Lagrangian submanifold $\L_p$  of cohomology class $p$, such that $\max_{\L_p}H \leq r$, then  $\overline H_{r, \varepsilon }(p)=\overline H(p)$.
\end{Lem}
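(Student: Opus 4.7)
The plan is to prove the equality $\overline H_{r,\varepsilon}(p) = \overline H(p)$ as two opposite inequalities; the first is universal in $p$, while only the reverse inequality uses the hypothesis on $\Lambda_p$.

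For the upper bound, I would first verify that the three conditions imposed on $f_{r,\varepsilon}$ force $f_{r,\varepsilon}(s) \leq s$ for every $s \in \R$: equality holds on $(-\infty,r]$ by assumption; on $[r, r+\varepsilon]$ the bound $|f'_{r,\varepsilon}| \leq 1$ together with $f_{r,\varepsilon}(r)=r$ gives $f_{r,\varepsilon}(s) \leq r + (s-r) = s$; and on $[r+\varepsilon,\infty)$ one has $f_{r,\varepsilon}(s) = r \leq s$. Consequently $H_{r,\varepsilon} \leq H$ pointwise, and the monotonicity property (1) of Theorem \ref{Thm4.2} yields $\overline H_{r,\varepsilon}(p) \leq \overline H(p)$ for every $p \in \R^n$.

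For the reverse inequality at the specific $p$ supplied by the hypothesis, I would set $c := \overline H(p)$; property (5) of Theorem \ref{Thm4.2} applied to $\Lambda_p$ gives $c \leq \max_{\Lambda_p} H \leq r$. Since $f_{r,\varepsilon}$ is the identity on $(-\infty,r]$, the two Hamiltonians $H$ and $H_{r,\varepsilon}$ coincide throughout the sublevel set $\{H \leq r\}$, a region containing $\Lambda_p$. My strategy is to verify the sufficient condition of Remark \ref{Rem4.3}(ii) for $H_{r,\varepsilon}$ at level $c$, namely $p \in \overline{E_c^+}(H_{r,\varepsilon}) \cap \overline{E_c^-}(H_{r,\varepsilon})$, which would force $\overline H_{r,\varepsilon}(p) = c = \overline H(p)$ and complete the proof. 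The inclusion $p \in \overline{E_c^-}(H_{r,\varepsilon})$ is immediate: $H_{r,\varepsilon} \leq H$ yields $E_c^-(H) \subseteq E_c^-(H_{r,\varepsilon})$, and $p \in \overline{E_c^-}(H)$ because $\overline H(p)=c$.

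The main obstacle is the $E_c^+$ inclusion, because the truncation $H \rightsquigarrow H_{r,\varepsilon}$ can only \emph{shrink} $E_c^+$, so the approximating Lagrangians witnessing $p \in \overline{E_c^+}(H)$ cannot simply be reused. To overcome this I would first apply the invariance under Hamiltonian symplectomorphisms (property (2)) to conjugate by an isotopy sending $\T^n \times \{p\}$ to $\Lambda_p$, reducing to the case $\Lambda_p = \T^n \times \{p\}$ and $H(q,p) \leq r$ for all $q \in \T^n$. I would then construct, for every $\delta > 0$ and for cohomology classes $p'$ close to $p$, Lagrangian submanifolds $\Lambda'_{p',\delta}$ Hamiltonianly isotopic to $\T^n \times \{p'\}$ that remain inside $\{H \leq r\}$ and on which $H \geq c - \delta$; these can be obtained by small perturbations of $\T^n \times \{p'\}$ in the fibre direction that keep the graph confined to the low-energy region. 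Because $\Lambda'_{p',\delta} \subset \{H \leq r\}$, we have $H_{r,\varepsilon}|_{\Lambda'_{p',\delta}} = H|_{\Lambda'_{p',\delta}} \geq c - \delta$, so $p' \in E_{c-\delta}^+(H_{r,\varepsilon})$; letting $p' \to p$ and $\delta \to 0$ yields the required $p \in \overline{E_c^+}(H_{r,\varepsilon})$, completing the argument via Remark \ref{Rem4.3}(ii).
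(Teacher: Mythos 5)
Your proposal departs from the paper's argument and, as written, has a genuine gap. The paper's proof is a one-liner: since $H$ and $H_{r,\varepsilon}=f_{r,\varepsilon}(H)$ Poisson-commute, Theorem \ref{Thm4.2}(7) gives $\overline{H_{r,\varepsilon}-H}=\overline{H}_{r,\varepsilon}-\overline H$; the difference $K:=H_{r,\varepsilon}-H$ vanishes identically on $\{H\le r\}\supset\Lambda_p$, so $K|_{\Lambda_p}=0$ places $p$ in both $E_0^+(K)$ and $E_0^-(K)$ (using $\Lambda_p$ itself as the witness), and Remark \ref{Rem4.3}(ii) gives $\overline K(p)=0$. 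No inequalities need to be tracked separately and no new Lagrangians need to be constructed.

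Your lower-bound argument, by contrast, rests on two assertions that are not justified. First, you claim $p\in\overline{E_c^-}(H)$ ``because $\overline H(p)=c$.'' That is essentially the converse of Remark \ref{Rem4.3}(ii) and it is not automatic: $\overline H(p)=c$ does not by itself produce, for classes $p'$ arbitrarily close to $p$, Lagrangians isotopic to $\Lambda_{p'}$ on which $H\le c$. (Even in the Tonelli case, the Contreras--Iturriaga--Paternain formula only gives smooth graphs with $\sup H < c+\delta$; if $p$ happens to be a minimum of $\overline H$, one cannot trade the $\delta$ for a displacement of $p$.) Second — and this is the real crux — you assert the existence of Lagrangians $\Lambda'_{p',\delta}$ satisfying the \emph{two-sided} constraint $\Lambda'_{p',\delta}\subset\{H\le r\}$ and $H|_{\Lambda'_{p',\delta}}\ge c-\delta$, invoking only ``small perturbations in the fibre direction.'' This is precisely what needs to be proved, not assumed: nothing in the hypotheses guarantees that the sublevel set $\{H\le r\}$ contains a Lagrangian isotopic to $\Lambda_{p'}$ on which $H$ is nearly $\ge c$, and even for Tonelli $H$ making this rigorous would require a weak-KAM type argument. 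Moreover, the lemma is stated for a \emph{general} Hamiltonian $H$ (no convexity), so a convexity-based construction would not prove the statement in the generality needed. The Poisson-commutativity trick of the paper bypasses both difficulties at once, and is the intended route.
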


\begin{proof}
Since clearly the Poisson bracket of $H_{r, \varepsilon }$ and $H$ vanishes, we have
$$\overline{H_{r, \varepsilon }-H}=\overline{H}_{r, \varepsilon } - \overline{H}$$ (see Theorem \ref{Thm4.2} (7)). It is thus enough to prove that $\overline{H_{r, \varepsilon }-H}(p)=0$. In other words, let $K$ be a Hamiltonian vanishing on $\L_p$, we must show that  $\overline K(p)=0$. But this follows immediately from Remark \ref{Rem4.3} (ii).
\end{proof}

\begin{Lem} 	
Let $f$ be a function in $C^0( {\mathbb R} ,{\mathbb R} )$ and $H:\rT^*\T^n\longrightarrow \R$ a convex superlinear Hamiltonian. Then, 
$\overline{f(H)}=f({\overline H})$.	
\end{Lem}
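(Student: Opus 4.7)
The strategy is to reduce the identity to the case of monotone increasing $f$ via a Poisson-commutativity trick, and then handle that case through the Lagrangian-graph characterization of $\overline{H}$ provided by Theorem \ref{Thm4.2}.

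\textbf{Step 1 (monotone increasing $f$).} I first show that for monotone increasing $f\in C^0(\mathbb{R},\mathbb{R})$ one has $\overline{f(H)}(p)\le f(\overline{H}(p))$ for every $p\in\mathbb{R}^n$. Fix $p$ and $\varepsilon>0$. Since $H$ is Tonelli, Fathi's weak KAM theory (equivalently, the smooth sub-solution approximation of the Hamilton--Jacobi equation) produces $u\in C^\infty(\mathbb{T}^n)$ with
\[
\max_{q\in\mathbb{T}^n}H(q,p+du(q))\le\overline{H}(p)+\varepsilon.
\]
The Lagrangian graph $\Lambda_u:=\{(q,p+du(q)):q\in\mathbb{T}^n\}$ is Hamiltonianly isotopic to $\Lambda_p$, and by monotonicity and continuity of $f$,
\[
\max_{\Lambda_u}f(H)\le f(\overline{H}(p)+\varepsilon).
\]
Property (5) of Theorem \ref{Thm4.2} applied to $f(H)$ (extended via Proposition \ref{prop9.3} when $f(H)$ is merely bounded) then gives $\overline{f(H)}(p)\le f(\overline{H}(p)+\varepsilon)$; letting $\varepsilon\to 0$ together with continuity of $f$ yields the claim.

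\textbf{Step 2 ($C^1$ case).} For $f\in C^1(\mathbb{R},\mathbb{R})$, pick $M\ge\sup|f'|$ on the compact interval of relevant $H$-values so that $g(x):=Mx+f(x)$ is monotone increasing. Step 1 applied to $g$ yields $\overline{g(H)}\le M\overline{H}+f(\overline{H})$. Since $MH$ and $f(H)$ are both functions of $H$, they Poisson-commute, so property (7) of Theorem \ref{Thm4.2}, combined with the homogeneity $\overline{\lambda H}=\lambda\overline{H}$ for $\lambda\ge 0$ (derived from (7) by iteration over rationals and $C^0$-continuity of $\cA$), gives $\overline{g(H)}=M\overline{H}+\overline{f(H)}$. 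Comparing the two expressions yields $\overline{f(H)}\le f(\overline{H})$. The same argument applied to $-f$ (for which the same $M$ works), together with the identity $\overline{-f(H)}=-\overline{f(H)}$ (which follows from (7) applied to $f(H)+(-f(H))=0$ and $\overline{0}=0$), gives the reverse inequality. Hence $\overline{f(H)}=f(\overline{H})$.

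\textbf{Step 3 (general $C^0$ case).} For arbitrary $f\in C^0(\mathbb{R},\mathbb{R})$, approximate $f$ uniformly on each compact interval by a sequence of $C^1$ functions $f_n$. The $C^0$-continuity of $\cA$ (from $\gamma(\varphi^1_{H_1},\varphi^1_{H_2})\le\|H_1-H_2\|_{C^0}$, Proposition \ref{confrontodistanze}) combined with the truncation procedure of Proposition \ref{prop9.3} implies $\overline{f_n(H)}\to\overline{f(H)}$, while $f_n(\overline{H})\to f(\overline{H})$ pointwise by continuity of $f$. Passing the identity $\overline{f_n(H)}=f_n(\overline{H})$ from Step 2 to the limit yields $\overline{f(H)}=f(\overline{H})$.

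The principal obstacle is the bookkeeping in Step 3: since $f(H)$ is in general only bounded rather than coercive when $f$ is bounded, the truncation radius from Proposition \ref{prop9.3} must be chosen in tandem with the approximation index $n$ so that the two limits commute. This is routine given the $C^0$-estimates already available in the theory, but requires some care to set up.
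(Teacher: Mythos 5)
Your proof is correct in outline, but it takes a genuinely different route from the paper's. The paper handles the base case for $f$ smooth, convex, and \emph{strictly increasing}: then $f(H)$ is again a Tonelli Hamiltonian, so the Contreras--Iturriaga--Paternain inf-sup formula $\overline{K}(p)=\inf_{u}\sup_{q}K(q,p+du(q))$ applies to $K=f(H)$ directly and immediately yields \emph{equality} $\overline{f(H)}=f(\overline H)$. The general case then follows in one stroke by writing an arbitrary $C^2$ function as a difference of two convex increasing ones ($f=(f+c(x-a)^2)-c(x-a)^2$, $a$ below $\min H$, $c$ large) and invoking Poisson-commutativity (Theorem \ref{Thm4.2}(7)), plus density of $C^2$ in $C^0$. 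You, by contrast, only get the one-sided bound $\overline{f(H)}\le f(\overline H)$ in your base case, because you allow any monotone increasing $f$ — then $f(H)$ need not be convex, the CIP formula does not apply, and property (5) combined with a smooth weak-KAM subsolution for $H$ gives only an upper bound. You then recover equality through an additional step: the affine shift $g(x)=Mx+f(x)$, homogeneity of $\cA$, and Poisson-commutativity produce the $\le$ direction for $C^1$ functions, and applying the same to $-f$ gives $\ge$. This is a clean bootstrap, and the underlying ingredient (the CIP inf-sup characterization, which is what guarantees the existence of the near-optimal smooth $u$) is the same as the paper's; you simply consume it through property (5) rather than directly. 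Two small points worth tightening: in Step 2, ``$\sup|f'|$ on the compact interval of relevant $H$-values'' is not quite right since the range of $H$ is $[\min H,\infty)$, so you should either assume $f'$ bounded (harmless, since in Step 3 you may choose the $C^1$ approximants $f_n$ with bounded derivative) or truncate first; and the parenthetical in Step 1 about extending via Proposition \ref{prop9.3} to bounded $f(H)$ is unnecessary — in the only place Step 1 is actually invoked (on $g=Mx+f$) the Hamiltonian $g(H)$ is coercive, and $\overline{f(H)}$ for bounded $f$ is defined \emph{algebraically} through property (7), exactly as in the paper. Your concern in the final paragraph about interchanging the $n\to\infty$ and truncation limits is real but, as you say, routine, and the paper glosses over the same point.
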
 

\begin{proof} 
Let us first consider the case in which $f$ is  smooth, convex and strictly increasing. Then, $f(H)$ is smooth, convex and superlinear and, using the characterization of the $\a$-function given in Contreras, Iturriaga and Paternain \cite{Cont-Itu-Pat} and Proposition \ref{prop9.3}, we obtain:
$$\overline {f(H)}(p)=\inf_{u \in C^{\infty}(\T^n)} \sup_{q\in \T^n}f(H(q,p+du(q))).$$ 
But  this clearly coincides with
$f( \inf_{u \in C^{\infty}(\T^n)} \sup_{q\in \T^n}H(q,p+du(q))=f(\overline H(p))$. Now because $f(H)$ and $g(H)$ commute for any $f$ and $g$, we have $ \overline{f(H)-g(H)}=\overline {f(H)}-\overline {g(H)}$ (see Theorem \ref{Thm4.2} (7)). Since the set of differences of convex increasing functions contains the set of $C^{2}$ functions, and these form a dense subset among continuous functions\footnote{Indeed we may also write $f(x)= (f(x)+cx^2)-cx^2$, and   for $c$ sufficiently large both terms are convex.}, this concludes our proof. 
\end{proof} 

As a result, we have that $\overline{H_{r}}=\overline{\min \{H,r\}}= \min \{\overline H , r\}$, hence  $\inf_{p}\overline H_{r}= \inf_{p}\{\overline H, r\}= \inf_{p} \overline H (p)$, provided $r> \inf \overline H(p)$.  And by the same argument, $\sup_{p}\overline H_{r}= \sup_{p}\min\{\overline H, r\}=r$ since $\lim_{ \Vert p \Vert \to \infty} \overline H (p)= +\infty$. This concludes the proof of Theorem \ref{Theorem1}. \\
\end{proof}

\subsection{Proof of Proposition \ref{ex-corollary}}\label{subsecprop2}

Before entering into the details of the proof, let us observe that also in this case
$\varphi \in \Ham_S(B^*\T^n)$ is not necessarily compactly supported, therefore we need to define compactly supported extensions. Let us denote $B_r^*\T^n:=\{(q,p):\; \|p\|\leq r \}$. 
If $H$ is a generating Hamiltonian for $\varphi$, for each $\e$ sufficiently small we can consider new compactly supported Hamiltonians $H_{\e}= f_{ 0, \varepsilon}(K_{H})$ supported in a neighborhood of $B^*\T^n\times \T$. The Hamiltonian $H_{ \varepsilon }$ has the following properties (see section \ref{sec-mainresults} for the properties of $f_{0,\e}$):
\begin{enumerate} 
\item [(1)] it coincides with $H$ in $B^*\T^n \times \T$;
\item [(2)] it is  non-negative and bounded by $ \varepsilon $ outside  $B^*\T^n \times \T$ and satisfies $ \| d_pH_{ \varepsilon }\| \leq C$ outside $B^*\T^n \times \T$,  for some constant $C$ depending  only on $H$;
\item [(3)] it  depends only on $\| p \|^2$ and $t $ outside $B^*\T^n \times \T$.
\end{enumerate} 
We denote  the set of all these Hamiltonians by $\cH_{ \varepsilon }(H)$.  
As before, we set 
\beqano
d^H({\rm id}, \varphi) &:=& \lim_{ \varepsilon \to 0 } d^H(\id , \varphi_{{H}_{\varepsilon }})\\
 \g({\rm id}, \varphi) &:=& \lim_{ \varepsilon \to 0 } \g (\id , \varphi_{{H}_{\varepsilon }})\\
c_{\pm}(\varphi) &:=& \lim_{ \varepsilon \to 0 } c_{\pm} (\varphi_{{H}_{\varepsilon }}).
\eeqano
For the same reasons as before,  these definitions are independent of the chosen $H_{\e}$ and one can define analogously the associated asymptotic quantities.

 \begin{proof} [Proof of Proposition \ref{ex-corollary}]
 Note first that the left-hand side inequality is just Proposition \ref{confrontodistanze}.  
Let $\varphi \in \Ham_{S}(B^*\T^n)$ be generated by a convex Hamiltonian $H$ and let $K_H$ denote its smooth extension to $\rT^*\T^n \times \T$, given by condition (S2) in the definition of $\cH_S$. Then,
\beqa{espressionebeta}
\beta_H(0) &:= &\b_{K_H}(0) = \sup_{c\in\rH^1(\T^n;\R)} \big(\langle c,0\rangle - \a_{K_H}(c) \big) = - \inf_{c\in\rH^1(\T^n;\R)} \a_{K_H}(c) = \nonumber\\
&=& - \inf_{p\in\R^n} \overline{K}_H(p),
\eeqa
where in the last equality we used that $\a_{K_H}$ coincides with the symplectic homogenization $\overline {K}_H$.

Let $\U_1$ denote the closed unit ball in $\R^n$ with the standard norm. First, we want to prove that $\overline{K}_H \equiv 0$ on $\dpr \U_1$. Then, using the convexity of $\U_1$, it follows easily that $\overline{K}_H \leq 0$ in $\U_1$ and that:
$$
\max_{\U_1} \overline{K}_H=0 \qquad {\rm and} \qquad \min_{\U_1} \overline{K}_H=\min_{p \in \R^n} \overline{K}_H.
$$

\noindent In order to prove that $\overline{K}_H$ vanishes on $\dpr \U_1$, observe that for each $p_0 \in \dpr\U_1$, ${K_H}$ vanishes on the Lagrangian submanifold $\L_{p_0}:=\T^n \times \{p_0\}$. 
The claim then follows from Theorem \ref{Thm4.2} (5).

The proof of Proposition \ref{ex-corollary} can now be obtained applying Theorem \ref{Thm4.2} (6).

In fact, 
\beqa{formulaprop2}
\gamma_{\infty}(\id,\varphi) &=& \lim_{\e \to 0} \gamma_{\infty}(\id,\varphi_{H_\e}) = \nonumber\\
&=&\lim_{\e\rightarrow 0} c_{+,\infty}(\varphi_{H_\e})  -  \lim_{\e\rightarrow 0} c_{-,\infty}(\varphi_{H_\e}) =\nonumber\\ 
&=& \lim_{\e\to 0} \sup_{p \in \R^n} \overline{H}_{\e}(p) - \lim_{\e\to 0} \inf_{p \in\R^n} \overline{H}_{\e}(p).
\eeqa

Using the fact that $K_H$ and $H_\e$ coincide on $B^*\T^n$ and proceeding as in Lemma \ref{Lemma1}, one can deduce that $\overline{H}_{\e}(p)=\overline{K}_{H}(p)$ for each $p\in\U_1$.
Moreover, since $H_\e$ is bounded by $\e$, we obtain:
\beqano
0&=&\sup_{p\in \U_1} \overline{K}_{H}(p)=\sup_{p\in\U_1} \overline{H}_{\e}(p) \leq
\lim_{\e\to 0} \sup_{p \in\R^n} \overline{H}_{\e}(p) \leq \lim_{\e\to 0} \e =  0.
\eeqano

Similarly for the infimum of $\overline {H}_{ \varepsilon }$, we have

\beqano
0 \geq \inf_{p\in \U_1} \overline{K}_{H}(p) =
\inf_{p\in\U_1} \overline{H}_{\e}(p) = \lim_{\e\to 0}\inf_{p\in \U_1} \overline{H}_{\e}(p),  \eeqano
using for the last equality the fact that the left-hand side is independent of $ \varepsilon $. 
Now we use the fact that $H_{ \varepsilon }\geq 0$ on ${\mathbb T}^n\times \{p\}$ for any $p \in {\mathbb R} ^n\setminus \U_{1}$ and Theorem \ref{Thm4.2} (5),
to conclude that $\overline H_{ \varepsilon }(p) \geq $ for $p \in {\mathbb R} ^n\setminus \U_{1}$, and hence $\inf_{p \in {\mathbb R}^n} \overline H_{ \varepsilon }(p)= \inf_{p \in \U_{1}} \overline H_{ \varepsilon }(p)$.

Therefore, substituting in (\ref{formulaprop2}) and using (\ref{espressionebeta}) we can conclude:

\beqano
\gamma_{\infty}(\id,\varphi) &=& \lim_{\e\to 0} \sup_{\R^n} \overline{H}_{\e}(p) - \lim_{\e\to 0} \inf_{\R^n} \overline{H}_{\e}(p) =\\
&=& - \inf_{p\in\U_1} \overline{K}_H(p) = \beta_H(0).
\eeqano

 \end{proof}

\subsection{Proof of Theorem \ref{counterex}}\label{proofcounterex}

In this section we shall construct   examples of   $\varphi \in \Ham_{S}(B^*\T^n)$ generated by a convex Hamiltonian $H$, such that
$\gamma_{\infty}(\id,\varphi) <  d^H_{\infty}(\id, \varphi)\,.$

\begin{proof} [Proof of Theorem \ref{counterex}]
The basic observation is that in the compactly supported case the Hofer distance can be bounded from below in terms of the so-called {\it Calabi invariant} (see Calabi \cite{Calabi} and Banyaga \cite{Banyaga}):
$$\Cal(\varphi):= \int_0^1\int_{\rT^*\T^n} H(q,p,t)\,\omega^n\,dt.$$
This invariant only depends on $\varphi$ and not on the path defined by $H$. 
Indeed, let us consider the Liouville form $\lambda=\sum_{j=1}^n p_{j}dq_{j}$ on $\rT^*\T^n $. Then $\varphi^*\lambda-\lambda= df_{\varphi}$, and in the compactly supported case, an easy computation (see Banyaga \cite{Banyaga} for instance, or Calabi \cite{Calabi} where this is used as the original definition) shows that  
\begin{equation} \label{Cal}\Cal (\varphi)=\frac{1}{n+1}\int_{\rT^*\T ^n} f_{\varphi}\, \omega^n. \end{equation}  
Let us adapt this to the case of $\Ham_S (B^*\T^n)$. In our situation we set again 
$$\Cal(\varphi)=\int_0^1\int_{B^*\T^n} H(q,p,t)\,\omega^n\,dt.$$
However it is not obvious that $\Cal (\varphi)$ as defined only depends on $\varphi$, and not on the choice of the path defined by $H$. 
To prove this, notice that if $H_{ \varepsilon }$ is a compactly supported extension of $H$ (as defined in section \ref{subsecprop2}), we have 
\beqa{limitcalabi}\lim_{ \varepsilon \to 0}\, \Cal (\varphi_{H_{ \varepsilon }})= \Cal (\varphi).\eeqa
Indeed, let us denote by $B_{\rho ( \varepsilon ) }^*\T^n$ a disc bundle of radius $\rho ( \varepsilon )$ that contains the support of $H_{ \varepsilon }- \varepsilon $ (\ie contains $K_{H}^{-1}( \varepsilon )$), and notice that since $d_{p} K_{H}(q,p,t)\cdot p $ is non-zero near $\partial B^*\T^n$ we have that  $\rho ( \varepsilon )-1 = O( \varepsilon )$. Applying formula (\ref{Cal})  to $\varphi_{H_{ \varepsilon }}$ instead of $\varphi$, we get
 \begin{equation} \label{Cal2} \Cal (\varphi_{H_{ \varepsilon }})=\frac{1}{n+1}\int_{B^*\T^n} f_{\varphi}+ \frac{1}{n+1}\int_{B_{\rho ( \varepsilon ) }^*\T^n\setminus B^*\T^n}f_{\varphi_{H_{ \varepsilon }}}; \end{equation}
  but on $B_{\rho ( \varepsilon ) }^*\T^n\setminus B^*\T^n$, $f_{\varphi_{H_{ \varepsilon }}}$ is given by $\langle d_pH_{ \varepsilon }(q,p,t), p \rangle - H_{ \varepsilon }(q,p,t)$. It is thus enough to check that the integral of this quantity on  $(B_{\rho ( \varepsilon )}^*\T^n\setminus B^*\T^n)\times\T$ goes to zero as $ \varepsilon $ goes to zero, or else that $$ \varepsilon \Vert  \langle dH_{ \varepsilon }(q,p,t), p \rangle - H_{ \varepsilon }(q,p,t) \Vert_{C^0} \stackrel{\e\to 0}\longrightarrow 0.$$ 
Since $ \Vert H_{ \varepsilon } \Vert _{C^0}$ and $ \varepsilon \Vert d_pH_{ \varepsilon }(q,p,t) \Vert_{C^0}$ go to zero on $(B_{\rho(\varepsilon ) }^*\T^n\setminus B^*\T^n)\times\T$, this clearly holds.\\

We now compare the  Hofer distance with the Calabi invariant. 
\begin{Lem}\label{HoferCalabi}
Let $\varphi \in \Ham_S(B^*\T^n)$. Then,
$$
d^H_{\infty}(\id, \varphi) \geq \frac{1}{{\rm Vol}(B^*\T^n)}|\Cal(\varphi)|.
$$
\end{Lem}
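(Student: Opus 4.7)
The plan is to prove the non-asymptotic inequality $|\Cal(\varphi)|\le{\rm Vol}(B^*\T^n)\cdot d^H(\id,\varphi)$, and then pass to iterates: since $\Cal$ is a homomorphism, $\Cal(\varphi^k)=k\,\Cal(\varphi)$, so applying the non-asymptotic bound to $\varphi^k$, dividing by $k$ and letting $k\to\infty$ yields the asymptotic estimate of the lemma.

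For the non-asymptotic bound, the elementary observation is that any compactly supported $K\in\cH_0$ has $K_t$ vanishing outside $\supp K_t$, so $\min_{\rT^*\T^n}K_t\le 0\le\max_{\rT^*\T^n}K_t$, whence $\sup|K_t|\le\Osc K_t$. It follows that
$$\left|\int K_t\,\omega^n\right|\le{\rm Vol}(\supp K_t)\cdot\Osc K_t,$$
and integrating in $t$ gives $|\Cal(\varphi^1_K)|\le{\rm Vol}(\supp K)\cdot\ell(K)$. Applied with $K=H_\e\in\cH_\e(H)$, whose support lies in $B^*_{\rho(\e)}\T^n$ with $\rho(\e)\to 1$, and combined with $(\ref{limitcalabi})$, this yields $|\Cal(\varphi)|\le{\rm Vol}(B^*\T^n)\cdot\limsup_{\e\to 0}\ell(H_\e)$.

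The genuine obstacle is that $d^H(\id,\varphi_{H_\e}^1)$ is an infimum of $\ell(K)$ over \emph{all} compactly supported $K\in\cH_0$ with $\varphi_K^1=\varphi_{H_\e}^1$, and the support of such a $K$ is not a priori controlled by ${\rm Vol}(B^*\T^n)$, so the estimate above does not directly descend to the infimum. To handle this, I would use the path-independent formula $(\ref{Cal})$, $\Cal(\varphi)=\frac{1}{n+1}\int f_\varphi\,\omega^n$, where $f_\varphi$ depends only on $\varphi$ and is supported in $\supp\varphi\subset B^*_{\rho(\e)}\T^n$. This reduces the task to the bound $\|f_\varphi\|_\infty\le(n+1)\,d^H(\id,\varphi)$, which is obtained by identifying $f_\varphi(z)$ with the classical action $\int_0^1(\lambda(X_{K_t})-K_t)(\varphi_K^t(z))\,dt$ for any generating $K\in\cH_0$ and controlling this integral in terms of $\ell(K)$. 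This action-versus-oscillation estimate is the technical heart of the proof. Once the non-asymptotic inequality is in hand, applying it to $\varphi^k$, dividing by $k$, and taking $k\to\infty$ concludes.
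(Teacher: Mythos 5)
Your asymptotic reduction (use $\Cal(\varphi^k)=k\,\Cal(\varphi)$ and divide by $k$) coincides with the final step of the paper's proof, and the obstacle you flag is real: the infimum defining $d^H$ ranges over \emph{all} compactly supported generating Hamiltonians, whose supports are a priori uncontrolled, so the naive bound $\bigl|\int K_t\bigr|\le{\rm Vol}(\supp K_t)\cdot\Osc K_t$ does not descend to the infimum. The paper's own write-up is terse precisely at this point (it states $d^H(\id,\varphi_\e)=\int_0^1\Osc(H_{\e,t})\,dt$, whereas in general only ``$\le$'' is immediate), and then works directly with the specific extension $H_\e$ supported in $B^*_{\rho(\e)}\T^n$; it never passes through $f_\varphi$.

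The fix you propose, however, contains a genuine gap: the estimate $\|f_\varphi\|_\infty\le(n+1)\,d^H(\id,\varphi)$ is false. The integrand in $f_\varphi(z)=\int_0^1\bigl(\lambda(X_{K_t})-K_t\bigr)(\varphi_K^t(z))\,dt$ contains $\lambda(X_{K_t})=p\cdot\partial_pK_t$, a Lagrangian-type term that is not controlled by $\Osc K_t$. Concretely, take $n=1$ and $K(q,p)=\chi(p)\sin p$ with $\chi$ a cutoff equal to $1$ on $[-2\pi N,2\pi N]$. Then $\varphi:=\varphi_K^1(q,p)=(q+K'(p),p)$, and a direct computation from $\varphi^*\lambda-\lambda=pK''(p)\,dp$ gives $f_\varphi(q,p)=pK'(p)-K(p)$ once normalized to vanish outside $\supp K$; thus $\|f_\varphi\|_\infty\sim 2\pi N$, while $\ell(K)=\Osc K=2$ and so $d^H(\id,\varphi)\le 2$. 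Hence no bound $\|f_\varphi\|_\infty\le C\,d^H(\id,\varphi)$ with an absolute constant $C$ can hold. Note that in this example $\Cal(\varphi)=\int K\approx 0$, so the Calabi invariant is small precisely because of cancellations in $\int f_\varphi$; a sup-norm bound on $f_\varphi$ discards exactly those cancellations and is therefore the wrong tool. Your reduction and your diagnosis are both good, but the ``technical heart'' you defer to is not a true statement, so the argument does not close.
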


\begin{proof}
Let $\varphi \in \Ham_S(B^*\T^n)$ and $H(q,p,t)$ be a (not necessarily convex) generating Hamiltonian. Recall from the definition of  $\Ham_S(B^*\T^n)$  that $H(q,p,t)=0$ on $\|p\|=1$
and that it admits a smooth extension $H_{ \varepsilon }: \rT^*\T^n\times \T \longrightarrow \R$ in $\cH_{ \varepsilon }(H)$, which is a function only of $t$ and $\|p\|^2$ outside $B^*\T^n\times \T$ and which is bounded by $ \varepsilon $ outside $B^*\T^n \times \T$ (see section \ref{subsecprop2}).

Now, denoting by $\varphi_{ \varepsilon }$ the time-one flow $\varphi^1_{H_{ \varepsilon }}$, we may write:

\beqano
d^H (\id, \varphi_{{ \varepsilon }}) &=&
\int_0^1 {\Osc}_{B_{\rho ( \varepsilon )}^*\T^n}(H_{\e,t})\,dt  \\
&= & \frac{1}{{\rm Vol}(B_{\rho ( \varepsilon )}^*\T^n)}  \int_0^1 \int_{\rT^*\T^n}  {\Osc}_{\rT^*\T^n}({H}_{\e,t})\, \omega^n\,dt  \; \\
&\geq & \frac{1}{{\rm Vol}(B_{\rho ( \varepsilon )}^*\T^n)} \left| \int_0^1 \int_{\rT^*\T^n}  {H}_{\e}(q,p,t)\, \omega^n\,dt \right|  \;\\
& =& \frac{1}{{\rm Vol}(B_{\rho ( \varepsilon )}^*\T^n)} \left| \Cal (\varphi_\e) \right| .
\eeqano

Hence, using (\ref{limitcalabi}), we obtain:
\beqano
d^H (\id , \varphi) &=& \lim_{ \varepsilon \to 0}d^H (\id, \varphi_{{ \varepsilon }}) \;  \\
&\geq& \lim_{ \varepsilon \to 0} \frac{1}{{\rm Vol}(B_{\rho ( \varepsilon )}^*\T^n)} \left| \Cal (\varphi_\e) \right|\; \\
&=&\frac{1}{{\rm Vol}(B^*\T^n)} \left| \Cal (\varphi)\right|.
\eeqano

Since $H(q,p,t+1)=H(q,p,t)$, we can conclude that:
\beqano
d^H_{\infty}(\id, \varphi) &\geq& \frac{1}{{\rm Vol}(B^*\T^n)} \lim_{k\rightarrow +\infty}\left( \frac{1}{k} \left| \Cal (\varphi_H^k ) \right|\right) \; \\
&=& \frac{1}{{\rm Vol}(B^*\T^n)}  \left| \int_0^1 \int_{B^*\T^n}  {H}(t,q,p)\, \omega^n\,dt \right| \; \\
&=& \frac{1}{{\rm Vol}(B^*\T^n)}|\Cal(\varphi)|.
\eeqano
\end{proof}

In order to find  our example of $\varphi \in \Ham_S(B^*\T^n)$  such that $\gamma_{\infty}(\id, \varphi) <  d^H_{\infty}(\id, \varphi)$, it is sufficient to find $\varphi$ such that 
$$
\gamma_{\infty}(\id, \varphi)  <  \frac{1}{{\rm Vol}(B^*\T^n)}|\Cal(\varphi)|.
$$
Let $U_{ \delta }=\big[\frac{1-\d}{2}, \frac{1+\d}{2}\big]^{n}$ be a cube of side $ \delta <\frac{1}{3}$ contained in $\T^n$. Let $H$ be a negative convex Hamiltonian of the form $H(q,p)=\g(q) ( \Vert p \Vert ^2-1)$, such that  $\g(q)\geq 0$ and

$$
\g(q) = \left\{ \begin{array}{lll} C && {\rm on}\; U_{\delta}\\
c && {\rm on}\; \T^n\setminus U_{2\delta}\,.
\end{array}\right.
$$
where $c<<C$.

Observe that:

$$
 \vert \Cal (\varphi_H^1) \vert :=\lim_{ \varepsilon \to 0} \vert \Cal (\varphi_{H_{ \varepsilon }}^1) \vert  \geq [\delta^n C+c (1-2^n\delta^n)] \int_{\{\|p|\leq 1\}}(1-\|p\|^2)\,dp\,.
$$
If we set $k:= \int_{\{\|p|\leq 1\}}(1-\|p\|^2)\,dp$, then
$$
|\Cal (\varphi_H^1)| \geq [\delta^n C+c (1- 2^n\delta^n)] k.
$$

In order to conclude the proof, it is sufficient to prove that 
$$ \lim_{ \varepsilon \to 0} \vert \overline{H}_{ \varepsilon }(p)\vert < \frac{\delta^n Ck}{{\rm Vol}(B^*\T^n)}  \quad \mbox{for all}\; p,$$
 where as usual $\overline{H}_{ \varepsilon }$ denotes the symplectic homogenization of $H_{ \varepsilon }$. Then, applying Proposition \ref{ex-corollary} and Lemma \ref{HoferCalabi}, it follows that
$$\gamma_{\infty}(\id, \varphi) <  \frac{1}{{\rm Vol}(B^*\T^n)}\vert \Cal(\varphi) \vert \leq d_{\infty}^H(\varphi , \id).$$

In order to prove this, we use the fact  that if there exists a Lagrangian submanifold $\L_p=\psi(\T^n\times \{p\})$, where $\psi \in \Ham(\rT^*\T^n)$, such that $-H_{ \varepsilon }\big|\L_p < A$, then $- \overline{H}_{ \varepsilon }(p)\leq A$; see Viterbo \cite[Theorem 3.2]{Viterbo2009}. We look for $\L_p$ in the  form
$$
\L_p= \{(q,p+df(q)): \; q\in\T^n \},
$$
with $f$ satisfying  the condition  $\|p+df(q)\| = 1$ on $U_{2\delta}$. 

\begin{Lem} 
For all vectors $p$ in $  {\mathbb R} ^n$ there exists a smooth  function $f$ on $\T^n$ such that $\Vert df(q)+p\Vert =1$ on $U_{ 2 \delta }$.
\end{Lem}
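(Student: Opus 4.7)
I would construct $f$ explicitly by cutting off a linear function with a bump. The basic idea: if $u \in \mathbb{R}^n$ is any chosen unit vector and if $df \equiv u-p$ on all of $U_{2\delta}$, then $\|df(q)+p\| = \|u\| = 1$ there automatically. The natural candidate $\tilde g(q) := \langle u-p, q\rangle$ is not $\mathbb{Z}^n$-periodic and so does not descend to $\T^n$; however, $U_{2\delta}$ is small enough to sit compactly inside a single fundamental domain, leaving room to taper $\tilde g$ down to zero before hitting the boundary.

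Concretely, first observe that $U_{2\delta} = [\tfrac{1}{2}-\delta,\, \tfrac{1}{2}+\delta]^n$ lies in the open cube $(0,1)^n$: indeed $\delta < \tfrac{1}{3}$ forces $\tfrac{1}{2}-\delta > \tfrac{1}{6}$ and $\tfrac{1}{2}+\delta < \tfrac{5}{6}$. I would choose an open neighborhood $V$ of $U_{2\delta}$ with $\overline V \subset (0,1)^n$, together with a bump function $\chi \in C^{\infty}_c((0,1)^n)$ satisfying $\chi \equiv 1$ on $V$. Then, fixing any unit vector $u \in \mathbb{R}^n$, I set
$$\tilde f(q) := \chi(q)\,\langle u-p,\, q\rangle$$
on $(0,1)^n$, extended by $0$ outside and then by $\mathbb{Z}^n$-periodicity to all of $\mathbb{R}^n$. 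Since $\tilde f$ is compactly supported inside a single fundamental domain, it descends to a smooth $f \in C^{\infty}(\T^n)$.

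Finally, I would verify the norm condition pointwise on $U_{2\delta}$. There $\chi \equiv 1$ on an open neighborhood, so $d\chi = 0$ at the point in question, and $df = \chi\, d\tilde g + \tilde g\, d\chi = d\tilde g = (u-p)\cdot dq$; hence $\|df(q) + p\| = \|u\| = 1$ for every $q \in U_{2\delta}$. I do not expect any serious obstacle; the only delicate point is the non-periodicity of $\tilde g$, which is resolved for free by the cut-off because $U_{2\delta}$ has room to spare inside the fundamental domain. Note that nothing forces $\|p\| = 1$: the unit vector $u$ can be picked independently of $p$, which is precisely why the construction works for \emph{every} $p \in \mathbb{R}^n$.
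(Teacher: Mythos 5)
Your proof is correct and takes essentially the same approach as the paper: fix a constant unit vector $u$, use $\langle u-p,q\rangle$ on $U_{2\delta}$, and extend to a smooth function on $\T^n$. The only difference is that you spell out the cutoff-by-bump-function step that the paper leaves implicit, which is a harmless (and arguably helpful) elaboration.
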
 

\begin{proof} 
We must find a vector field $u(q)$ of norm $1$ on $U_{2 \delta }$, such that $df(q)=u(q)-p$ on $U_{ 2 \delta }$. Take $u$ to be constant on $U_{ 2 \delta }$, then $f(q)=\langle u-p,q\rangle$ and extend this to a smooth function on $\T^n$.
\end{proof} 
Then,
\beqano
 -H_{ \varepsilon }(q,p+df(q)) &=& 0 \quad {\rm on}\; U_{2\delta} \quad(\text{because}\;  \Vert df(q)+p \Vert =1) \\
- \varepsilon  \leq -H_{ \varepsilon  }(q,p+df(q)) &\leq& {c} \quad {\rm on}\; \T^n\setminus U_{2\delta}\,.
\eeqano

Hence $-\overline H (p) = |\overline H (p)| = \lim_{ \varepsilon \to 0} \vert {\overline H}_{ \varepsilon } (p)\vert  \leq {c}$ for all $p$. Therefore, we proved that $$\gamma_{\infty}(\varphi, \id)= \lim_{ \varepsilon \to 0} \Osc ({\overline H}_{ \varepsilon }) \leq   \lim_{ \varepsilon \to 0}\sup_{p\in {\mathbb R}^n}  \vert {\overline H}_{ \varepsilon }(p) \vert \leq c$$

Provided we choose our  constants to  satisfy ${c} <  \frac{\delta^n Ck}{{\rm Vol}(B^*\T^n)},$ this concludes the proof of Theorem  \ref{counterex}.
\end{proof}

 \begin{Rem} 
 The truncation of the Hamiltonian $H$ at level $0$ gives an example of a compact supported Hamiltonian map on the unit cotangent ball $B^*\T^{n}$ for which the Hofer and $\gamma$-distance differ. 
 \end{Rem}

%
%
%
%

\end{document}